\documentclass[11pt]{article}
\usepackage[margin=0.85in]{geometry}
\usepackage{amsmath,amssymb,amsthm,mathtools,microtype}
\usepackage[utf8]{inputenc}
\usepackage[T1]{fontenc}
\usepackage{lmodern}
\usepackage[hidelinks]{hyperref}
\newtheorem{theorem}{Theorem}[section]
\newtheorem{proposition}[theorem]{Proposition}

\newtheorem{corollary}[theorem]{Corollary}
\theoremstyle{remark}
\newtheorem{remark}[theorem]{Remark}
\newcommand{\R}{\mathbb R}
\newcommand{\Z}{\mathbb Z}
\newcommand{\C}{\mathbb C}
\newcommand{\ellp}{\ell^2(\Z_+)}
\newcommand{\ellZ}{\ell^2(\Z)}
\newcommand{\cH}{\mathcal H}
\newcommand{\cU}{\mathcal U}
\newcommand{\sL}{\mathsf L}
\newcommand{\sP}{\mathsf P}
\newcommand{\ind}{\mathbf1}
\newcommand{\dual}{\mathrm{dual}}
\newcommand{\ASC}{\mathrm{ASC}}
\renewcommand{\Im}{\operatorname{Im}}
\newcommand{\diag}{\operatorname{diag}}
\newcommand{\sgn}{\operatorname{sgn}}
\newcommand{\Span}{\operatorname{span}}

\hypersetup{pdftitle={The Fixed-q Edge Limit of the q-Krawtchouk Ensemble},pdfauthor={Pavel Nikitin}}
\title{The Fixed-$q$ Edge Limit of the $q$-Krawtchouk Ensemble}
\author{Pavel Nikitin\thanks{Email: \texttt{pnikitin@bimsa.cn}, \texttt{pnikitin0103@yahoo.co.uk}}\\[4pt]
\small Beijing Institute of Mathematical Sciences and Applications (BIMSA),\\
\small Beijing 101408, People's Republic of China}
\date{}

\begin{document}
\maketitle

\begin{abstract}
We study the fixed-$q$ right-edge asymptotics of the ordinary $q$-Krawtchouk ensemble. For $0<q<1$, finite polynomial duality gives a direct proof of convergence of the correlation kernels to an explicit limiting kernel. The local Jacobi limit is a shifted continuous $q^{-1}$-Hermite expression with an indeterminate moment problem, so the limiting Jacobi expression does not determine a unique self-adjoint operator. A single spectral atom, asymptotically saturating the mass bound, selects the realization and yields strong-resolvent convergence. The limiting kernel is its positive spectral projection and admits spectral-series, Christoffel--Darboux, and partial-theta representations, and the associated dual operator agrees with a two-step spatial Markov operator of Borodin--Corwin. A companion $q>1$ limit is instead essentially self-adjoint, of Al-Salam--Carlitz~I type.
\end{abstract}

\section{Introduction}\label{sec:introduction}

The ordinary $q$-Krawtchouk ensemble is a finite orthogonal polynomial ensemble whose correlation kernel is an orthogonal projection onto the first $q$-Krawtchouk polynomial modes. In this paper we study its right-edge asymptotics with $q$ fixed. Our main regime is $0<q<1$, with both $N$ and $M-N$ tending to infinity and
\[
p_M q^{2N} \longrightarrow \rho\in(0,\infty).
\]
A $q$-Krawtchouk ensemble also arises from skew Howe duality. In the constant-$q$ specialization considered in Betea--Nazarov--Nikitin--Scrimshaw~\cite{BNNS}, the corresponding corner asymptotics were conjectured to be governed by a $q$-analogue of the discrete Hermite kernel. We determine the limiting correlation projection, identify its self-adjoint spectral realization, and derive several explicit representations of the resulting kernel.

A useful feature of the $q$-Krawtchouk system is its finite polynomial duality. After reindexing the polynomial degree, the finite correlation projection becomes a half-lattice projection under a dual unitary transform. This gives a direct route to the fixed-$q$ limit: explicit asymptotics of the dual spectral data imply strong convergence of the dual transforms, and hence of the correlation projections. In particular, the limiting kernel is obtained without first identifying a self-adjoint realization of the limiting Jacobi expression.

The operator-theoretic interpretation of this limit is nevertheless nontrivial. After reflection at the right endpoint, scaling, and centering at the degree-$N$ eigenvalue, the finite Jacobi coefficients converge to those of a scalar shift of the continuous $q^{-1}$-Hermite Jacobi expression. The corresponding moment problem is indeterminate, so the limiting formal Jacobi expression does not determine a unique self-adjoint operator. The limiting spectral points and weights obtained in the direct argument define a distinguished self-adjoint realization $A_\rho$ of this expression, and the limiting correlation projection is
\[
K_\rho = \ind_{(0,\infty)}(A_\rho).
\]

The self-adjoint realization $A_\rho$ can be selected from the limit of a single spectral mass. The full fixed-coordinate profile $\pi_M(l) \to \mu_l$ enters the direct kernel argument through convergence of the dual transform, whereas at the distinguished point $l=0$,
\[
\pi_M(0) \longrightarrow \mu_0 = M_\rho(0),
\]
where $M_\rho(0)$ is the maximal admissible mass at zero for the limiting indeterminate moment problem. We isolate this mechanism in an abstract selection theorem: convergence of a moving spectral atom to the maximal admissible mass forces the limiting measure to be the unique $N$-extremal measure containing that point and yields strong-resolvent convergence to the corresponding self-adjoint extension.

The two parts of the argument are related to, but distinct from, several earlier approaches to asymptotics of determinantal processes. In the spectral-projection method of Borodin--Olshanski~\cite{BO2007,BO}, convergence of the underlying difference operators and their spectral projections provides the limiting correlation kernel. Bornemann~\cite{Bornemann} developed the corresponding Sturm--Liouville approach in a setting that also includes limit-circle hard-edge operators; there the relevant boundary condition is specified and shown to be compatible with the scaled eigenfunctions. Direct asymptotics of explicit orthogonal-polynomial kernels provide another route to determinantal limits; see, for example, Gorin--Olshanski~\cite{GO}. A particularly relevant comparison is the elliptic tail kernel of Cuenca--Gorin--Olshanski~\cite{CGO}, where direct kernel asymptotics are followed by a separate structural proof of the projection property. In the present $q$-Krawtchouk problem, strong convergence to a projection already follows from the direct dual transform, while the subsequent issue is the identification and selection of the unbounded limit-circle Jacobi realization. A related fixed-$q$ phenomenon occurs in the recent $q$-Racah analysis of Knizel--Petrov~\cite{KP}, where limiting symmetric Jacobi operators with deficiency indices $(1,1)$ arise; the corresponding extension problem is left open there.

The dual description also carries additional structure. The finite dual $q$-Krawtchouk difference operator has a bilateral fixed-$q$ limit whose eigenfunctions are the columns $u_n$ of the limiting transform. After a ground-state conjugation, this limiting dual operator agrees with the parity-preserving two-step spatial Markov operator arising in Borodin--Corwin~\cite{BC}; the stationary weights on the even parity class are precisely the weights $\mu_l$ above. The dual operator also provides a discrete Green identity which sums the half-lattice spectral series and gives a Christoffel--Darboux, or integrable, representation of $K_\rho$. A direct expansion of the same spectral series yields an alternative formula in terms of partial theta functions.

For comparison, we also consider the opposite fixed-$q$ regime $q>1$. With the same reflection point, a nontrivial Jacobi limit occurs in the critical window $M-2N=O(1)$. Along a sequence for which $M-2N = \kappa\in\Z$ eventually, the limiting operator is an Al-Salam--Carlitz I Jacobi operator with base $Q=q^{-1}$. In contrast to the $0<q<1$ limit, it is essentially self-adjoint, so no extension-selection problem occurs. Its spectrum consists of two geometric branches accumulating at zero, and the limiting negative spectral projection admits both a spectral-series representation and an integrable form.

\subsection{Main results}\label{subsec:main-results}

Write $\Z_+ := \{0,1,2,\ldots\}$, and let $(e_n)_{n\in\Z_+}$ denote the canonical basis of $\ellp$. Assume
\[
0<q<1, \qquad N = N_M \to \infty, \qquad M-N_M \to \infty, \qquad p_M q^{2N_M} \to \rho\in(0,\infty).
\]
Let $A_M$ be the reflected $q$-Krawtchouk Jacobi matrix, scaled by $p_M^{-1/2}$ and centered at the degree-$N_M$ eigenvalue, and let $\widehat A_M = A_M \oplus 0$ on $\ellp$. Its coefficients converge, for every fixed Jacobi index, to
\[
A_{\rho,\mathrm{formal}} = J+c_\rho I, \qquad
(Jf)_n = \sqrt{q^{-n-1}-1}\,f_{n+1} + \sqrt{q^{-n}-1}\,f_{n-1}, \qquad
c_\rho = \rho^{-1/2}-\rho^{1/2},
\]
with the usual convention that the $f_{-1}$ term is absent.

Set
\[
s_l = \rho^{1/2}q^{-l} - \rho^{-1/2}q^l, \qquad l\in\Z,
\]
\[
\mu_l = \frac{\rho^{-2l}(1+\rho^{-1}q^{2l})q^{l(2l-1)}}
{(-\rho^{-1},-q\rho,q;q)_\infty}, \qquad P_n(s) = \frac{q^{n(n+1)/4}}{\sqrt{(q;q)_n}}h_n(s/2\mid q).
\]
Let $\widehat K_{M,N}$ denote the reflected finite correlation projection, extended by zero to $\ellp$.

\begin{theorem}[Fixed-$q$ $q$-Krawtchouk edge limit, $0<q<1$]\label{thm:intro-main-qsmall}
There is a limiting correlation projection $K_\rho$ with matrix kernel
\[
K_\rho(m,n) = \sum_{l=1}^{\infty}\mu_l P_m(s_l)P_n(s_l), \qquad m,n\in\Z_+,
\]
such that
\[
\widehat K_{M,N} \longrightarrow K_\rho
\]
strongly on $\ellp$. In particular, $K_{M,N}(m,n) \longrightarrow K_\rho(m,n)$ for every fixed $m,n\in\Z_+$.

The spectral data
\[
\lambda_l = s_l+c_\rho, \qquad \mu_l, \qquad l\in\Z,
\]
define a distinguished self-adjoint realization $A_\rho$ of $A_{\rho,\mathrm{formal}}$, and
\[
K_\rho = \ind_{(0,\infty)}(A_\rho).
\]
Moreover,
\[
\widehat A_M \xrightarrow{\mathrm{s.r.}} A_\rho.
\]
At the centered zero mode,
\[
\pi_M(0) \longrightarrow \mu_0 = M_\rho(0),
\]
where $M_\rho(0)$ is the maximal admissible mass at zero for the limiting indeterminate moment problem.

The kernel $K_\rho$ also admits Christoffel--Darboux and partial-theta representations, developed in Section~\ref{sec:explicit-kernel}.
\end{theorem}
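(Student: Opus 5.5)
The plan follows the route sketched in the introduction: finite polynomial duality first, then asymptotics of the dual spectral data, and only afterwards the operator-theoretic identification.

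\emph{Step 1 (finite duality).} Reflect the degree index at the right endpoint, $n\mapsto N_{\max}-n$, and the spatial index at the degree-$N$ eigenvalue, $x\mapsto l$. The orthonormal $q$-Krawtchouk functions $\sqrt{\pi_M(l)}\,p_n(x_l)$ then form a unitary matrix $U_M$ between the finite reflected spatial lattice and the finite degree lattice. By duality of $q$-Krawtchouk polynomials, its columns are themselves orthonormal dual polynomials in $l$. In these coordinates $\widehat K_{M,N}=U_M\,\ind_{\{l\ge 1\}}\,U_M^*$, with $U_M$ extended by zero, and $\widehat A_M=U_M\,\diag(\lambda^{(M)}_l)\,U_M^*$. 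Here $\lambda_l^{(M)}$ is the scaled, centered eigenvalue. Whether the positive modes are indexed by $l\ge1$ or $l\le -1$ is a matter of orientation convention, to be fixed against the claimed formula.

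\emph{Step 2 (pointwise asymptotics).} For fixed $l\in\Z$ and fixed $n$, take $p_Mq^{2N}\to\rho$. Using the explicit weights and the $q$-binomial theorem for the normalising constant, I will show that $\lambda_l^{(M)}\to s_l+c_\rho$ and $\pi_M(l)\to\mu_l$. The latter identifies the product $(-\rho^{-1},-q\rho,q;q)_\infty$ via the Jacobi triple product. Next, the three-term recurrence in $n$ has coefficients converging to those of $A_{\rho,\mathrm{formal}}$ with fixed initial condition, so $p_n(x_l)\to P_n(s_l)$ entrywise by induction. This gives entrywise convergence $U_M(n,l)\to u_n(l):=\sqrt{\mu_l}P_n(s_l)$.

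\emph{Step 3 (strong convergence).} Entrywise convergence of isometries gives weak convergence only. To upgrade it I will prove $\sum_{l\in\Z}\mu_lP_n(s_l)^2=1$ for each $n$, so that no mass escapes. I expect this to be the main obstacle. I would obtain it either from the dual orthogonality on the full bilateral lattice $l\in\Z$ (the limiting dual $q$-Krawtchouk operator, whose eigenfunctions are the $u_n$), or via uniform tail bounds $\sum_{|l|>L}\pi_M(l)p_n(x_l)^2\to0$, which follow from the Gaussian-type decay $q^{2l^2}$ in $\mu_l$ on one side and $\rho^{-2l}$ behaviour on the other, made uniform in $M$. Once each column has norm one, $U_M e_l\to u_\cdot(l)$ and $U_M^*e_n\to(u_n(l))_l$ in norm. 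Together with completeness of $\{u_n\}$, which again comes from the dual orthogonality of the rows, this yields strong convergence of $U_M$ to a unitary $U$. Hence $\widehat K_{M,N}\to U\ind_{\{l\ge1\}}U^*=:K_\rho$ strongly, and reading off matrix entries gives the stated series. Strong-resolvent convergence follows in the same way, from $(\widehat A_M-z)^{-1}=U_M(\lambda^{(M)}-z)^{-1}U_M^*$. The only input needed is that the diagonal multipliers converge pointwise and are bounded by $1/|\Im z|$.

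\emph{Step 4 (identification).} Define $A_\rho:=U\diag(\lambda_l)U^*$. Then $A_\rho$ is self-adjoint, each $u_\cdot(l)$ satisfies $A_{\rho,\mathrm{formal}}$ entrywise by passing to the limit in the recurrence, and $A_\rho$ contains the finitely supported vectors in its domain. So $A_\rho$ is a self-adjoint extension of the minimal operator, and $K_\rho=\ind_{(0,\infty)}(A_\rho)$ because $\lambda_l>0$ exactly for $l\ge1$ (up to the orientation convention of Step 1). For the mass statement, the measure $\sum_l\mu_l\delta_{\lambda_l}$ is an $N$-extremal solution of the indeterminate moment problem, as the spectral measure of a self-adjoint extension with $\ell^2$-density of polynomials. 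By the Riesz criterion, an $N$-extremal measure with an atom at $0$ carries there the maximal mass $M_\rho(0)=1/\sum_nP_n(-c_\rho)^2$. Thus $\mu_0=M_\rho(0)$, and combined with Step 2 this gives $\pi_M(0)\to M_\rho(0)$.
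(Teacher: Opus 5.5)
\textbf{There is a genuine gap in Step~3: completeness of $\{u_n\}_{n\ge0}$ in $\ell^2(\Z)$ is never established.} Completeness is equivalent to N-extremality of $\nu=\sum_l\mu_l\delta_{\lambda_l}$, and this is exactly where the indeterminacy of the limiting moment problem matters.

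The normalization you plan to prove, $\sum_l\mu_lP_n(s_l)^2=1$ together with orthogonality in $n$, only says that $\nu$ is \emph{some} representing measure. It gives $U_M^*e_n\to u_n$ in norm. It does not give $U_Me_l\to u_\cdot(l)$ in norm, as you claim. Indeed $\|u_\cdot(l)\|^2=\mu_l\sum_{n\ge0}P_n(s_l)^2$. Fatou applied to the finite identity $\sum_{n=0}^{M}v_{n,M}(l)^2=1$ yields only $\le 1$, which is the universal bound $\mu_l\le M_\rho(\lambda_l)$; equality is exactly N-extremality.

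In the indeterminate case the $u_n$ are orthonormal for \emph{every} representing measure. Nothing in coefficientwise convergence forces weak limits of the finite spectral measures to be N-extremal. So ``dual orthogonality of the rows'' is either insufficient, if it means orthonormality of the $u_n$, or a restatement of what must be proved. In the latter reading, $\sum_n u_n(l)u_n(l')=\delta_{ll'}$ does not survive $M\to\infty$ without uniform-in-$M$ control of the $n$-tails.

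The consequences are real. If $y\in\ell^2(\Z)$ is orthogonal to all $u_n$, then $U_My\rightharpoonup0$ while $\|U_My\|=\|\ind_{\mathcal I_M}y\|\to\|y\|$. Hence:
\begin{itemize}
\item $U_M$ does not converge strongly;
\item the limit of $\widehat K_{M,N}$ need not be a projection;
\item $U(\lambda_l-z)^{-1}U^*$ is only a generalized resolvent (a compression), not the resolvent of a self-adjoint operator on $\ell^2(\Z_+)$;
\item your Step~4 derivation of $\mu_0=M_\rho(0)$ becomes circular, since it is the same saturation identity at $l=0$.
\end{itemize}

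\textbf{How the paper handles this, and how you could fix it.} The paper imports completeness from Christiansen--Koelink (Theorem~3.9 and Eq.~(4.1)), transported to the parameter $\rho^{-1/2}$ by a lattice shift. Only then does surjectivity of $\cU_\rho$ give $V_M^*\to\cU_\rho^*$ strongly, and only then is $\mu_0=M_\rho(0)$ deduced from Simon's Theorem~5. You need either that citation or a direct proof. Two options:
\begin{itemize}
\item Verify $\mu_l\sum_nP_n(s_l)^2=1$, which is a $q^{-1}$-Mehler-type summation. By Simon's equality case, once $\nu$ is known to be a representing measure, a single $l$ suffices.
\item Prove uniform-in-$M$ bounds on $\sum_{n>K}v_{n,M}(l)^2$ from the explicit finite $q$-Krawtchouk functions.
\end{itemize}

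\textbf{Once completeness is supplied, the rest of your plan goes through.} Your resolvent argument writes $(\widehat A_M-z)^{-1}=U_M(\lambda^{(M)}-z)^{-1}U_M^*+(-z)^{-1}(I-\Pi_M)$, with $\Pi_M$ the projection onto $\cH_M$, as a product of strongly convergent factors. This is a legitimate and shorter route to strong resolvent convergence $\widehat A_M\to A_\rho$ than the paper's. The paper instead uses its maximal-atom selection theorem: moment convergence, Portmanteau at the single atom $\pi_M(0)\to M_\rho(0)$, and Simon's equality case. The advantage of the paper's route is that it isolates a mechanism needing only one atom, not the whole dual transform.
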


The strong-resolvent statement is an instance of the general selection principle proved in Section~\ref{sec:selection-principle}: under coefficientwise convergence to an indeterminate Jacobi problem, convergence of a distinguished moving spectral atom to the maximal admissible mass at its limiting point selects the unique $N$-extremal measure containing that point and hence the corresponding self-adjoint extension.

The opposite fixed-$q$ regime gives a companion result in which the extension-selection step disappears. Assume
\[
q>1, \qquad M \to \infty, \qquad p_M q^{2N_M} \to \rho\in(0,\infty),
\]
and, for some $\kappa\in\Z$,
\[
M-2N_M = \kappa
\]
for all sufficiently large $M$. Set
\[
Q := q^{-1}\in(0,1), \qquad c := (\rho q^\kappa)^{-1}.
\]
Let $L_M$ be the ungauged reflected $q$-Krawtchouk Jacobi matrix in the $q>1$ convention, centered by taking the degree-$N_M$ eigenvalue minus the reflected matrix and scaled by $(p_M q^M)^{-1}$. Introduce the fixed alternating gauge
\[
Ge_n := (-1)^ne_n, \qquad L_M^+ := G_M L_M G_M.
\]
The Jacobi coefficients of $L_M^+$ converge to
\[
a_n^{\ASC} = \sqrt{cQ^n(1-Q^{n+1})}, \qquad b_n^{\ASC} = (1-c)Q^n.
\]
Let $L$ be the unique self-adjoint Al-Salam--Carlitz I Jacobi operator with these coefficients.

\begin{theorem}[Critical $q>1$ companion limit]\label{thm:intro-main-qgreater}
With $\widehat L_M = L_M \oplus 0$ and $\widehat L_M^+ = L_M^+ \oplus 0$,
\[
\widehat L_M^+ \xrightarrow{\mathrm{s.r.}} L, \qquad \widehat L_M \xrightarrow{\mathrm{s.r.}} GLG.
\]
Moreover,
\[
G\widehat K_{M,N}G \longrightarrow \ind_{(-\infty,0)}(L)
\]
strongly on $\ellp$. Writing $K_{\ASC}$ for the matrix kernel of $\ind_{(-\infty,0)}(L)$, for every fixed $m,n\in\Z_+$,
\[
K_{M,N}(m,n) \longrightarrow (-1)^{m+n}K_{\ASC}(m,n).
\]
The limiting kernel admits explicit spectral-series and integrable representations in terms of Al-Salam--Carlitz I polynomials; these are developed in Section~\ref{sec:qgreater1-main}.
\end{theorem}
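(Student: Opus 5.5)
The proof will parallel the $0<q<1$ argument, except that the limiting Jacobi operator is essentially self-adjoint, so strong-resolvent convergence follows from coefficientwise convergence alone. First I will write the reflected $q>1$ $q$-Krawtchouk recurrence coefficients explicitly in terms of $q$, $p_M$, $N_M$ and $M$. I then center at the degree-$N_M$ eigenvalue, scale by $(p_Mq^M)^{-1}$, and substitute $M=2N_M+\kappa$ and $p_Mq^{2N_M}\to\rho$. With $Q=q^{-1}$ and $c=(\rho q^\kappa)^{-1}$, this should give $a_n\to\sqrt{cQ^n(1-Q^{n+1})}$ and $b_n\to(1-c)Q^n$ for each fixed $n$, up to the signs $(-1)$ of the off-diagonal entries. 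Those signs are removed by the fixed gauge $G$; this fixes the sign in the statement and must be checked. The matrices $\widehat L_M^+$ are finite, hence bounded on $\ellp$, and their common core is the space of finitely supported vectors. The limit coefficients are bounded, so $L$ is bounded, and therefore self-adjoint on $\ellp$ with a unique realization. Entrywise convergence of the banded matrices gives $\widehat L_M^+f\to Lf$ for every finitely supported $f$. Since this is a core for $L$, the standard core criterion (Reed--Simon VIII.25, which needs no uniform bound) yields $\widehat L_M^+\to L$ in the strong-resolvent sense. Conjugating by the unitary $G$ gives the statement for $\widehat L_M$, because $G$ commutes with the extension by zero.

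For the projections, $G\widehat K_{M,N}G$ is the spectral projection of $\widehat L_M^+$ onto the appropriate half-line, restricted to the finite block; the sign of the centering converts the first $N$ modes into $(-\infty,0)$. Strong-resolvent convergence implies $\ind_{(-\infty,0)}(\widehat L_M^+)\to\ind_{(-\infty,0)}(L)$ strongly, provided $0$ is not an eigenvalue of $L$. Two further issues must be handled. The padding zero block of $\widehat L_M^+$ has eigenvalue $0$, so the half-open interval must be chosen so that the padding is excluded, matching $K$ extended by zero. More seriously, the Al-Salam--Carlitz I spectrum has two geometric branches $\{Q^k\}$ and $\{-cQ^k\}$ accumulating at $0$, so $0$ lies in the spectrum. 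It is not an eigenvalue of $L$, since the orthogonality measure is purely discrete on those points, but it is not isolated either.

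This accumulation at $0$ is the main obstacle. Strong convergence of $\ind_{(-\infty,0)}$ requires only $\ker L=0$, which holds because the Al-Salam--Carlitz measure has no mass at $0$. The finite eigenvalues, however, must not leak across $0$. My first approach is to use the explicit finite spectrum: the $q>1$ $q$-Krawtchouk nodes, centered and scaled, are explicit geometric-type points. Since $M-2N_M=\kappa$ is fixed, I expect the centered eigenvalues to form exactly two branches, with signs determined by index relative to $N$, converging to $\{Q^k\}$ and $\{-cQ^k\}$ respectively. I then check that the finite projection is exactly $\ind_{(-\infty,0)}(\widehat L_M^+)$ with no eigenvalue at $0$ other than the padding. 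With that identification, the general theorem on strong convergence of spectral projections at a non-eigenvalue gives the claim. The kernel statement follows by pairing with $e_m,e_n$ and undoing $G$, which produces the factor $(-1)^{m+n}$.
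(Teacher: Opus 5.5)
Your proposal is correct and follows essentially the paper's argument. You gauge by $G$, obtain coefficientwise convergence to the Al-Salam--Carlitz I coefficients, and get a unique self-adjoint limit; your boundedness observation is a simpler substitute for the paper's appeal to Carleman's criterion. You then apply the common-core criterion, and finally obtain strong convergence of $\ind_{(-\infty,0)}$ from the fact that the $e_0$-spectral measure has no atom at $0$ and $e_0$ is cyclic.

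Three small corrections:
\begin{itemize}
\item The ``leakage across $0$'' is not an obstacle for strong convergence. The identification of the finite projection with $\ind_{(-\infty,0)}(\widehat L_M^+)$ needs only the reversed ordering $\Lambda_{k+1}<\Lambda_k$ for $q>1$.
\item Your claim that there is no eigenvalue at $0$ other than the padding is false. The degree-$N$ mode sits exactly at $0$ because of the centering. This is harmless, since the open interval excludes it.
\item The spectral-projection theorem is stated for bounded intervals $(a,b)$, so you should record a uniform lower spectral bound before applying it to $(-\infty,0)$. Either $\sup_M\|\widehat L_M^+\|<\infty$ (read off from the coefficients) or the paper's observation that the lowest eigenvalue tends to $-c$ suffices.
\end{itemize}
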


The paper is organized as follows. Section~\ref{sec:finite-spectral} introduces the finite $q$-Krawtchouk ensemble, its Jacobi representation, and the finite dual transform. Section~\ref{sec:q-less-one-scaling} derives the $0<q<1$ right-edge Jacobi limit. Section~\ref{sec:kernel-convergence} proves convergence of the correlation projections directly from the finite dual representation. Section~\ref{sec:selected-extension} identifies the corresponding self-adjoint realization of the limiting Jacobi expression, and Section~\ref{sec:selection-principle} isolates the maximal-atom selection mechanism and proves strong-resolvent convergence. Section~\ref{sec:explicit-kernel} develops the limiting dual operator, its relation to Borodin--Corwin, and the Christoffel--Darboux and partial-theta representations of the kernel. Finally, Section~\ref{sec:qgreater1-main} treats the companion $q>1$ regime.

\section{The \texorpdfstring{$q$}{q}-Krawtchouk ensemble and its spectral formulation}\label{sec:finite-spectral}

We begin with the finite $q$-Krawtchouk ensemble and its spectral interpretation.  This section fixes the notation common to the asymptotic analysis below.  The normalization appropriate to the main regime $0<q<1$ will be introduced in the next section; the different normalization needed for $q>1$ is postponed to the companion regime later in the paper.

\subsection{The finite orthogonal polynomial ensemble}

For the main regime let $0<q<1$, let $M\in\Z_{\ge0}$ and $p>0$, and consider the ordinary $q$-Krawtchouk polynomials in the standard convention~\cite[Eqs.~(14.15.1)--(14.15.2)]{KLS}:
\[
K_k(q^{-x};p,M;q) =
{}_3 \phi_2\!\left(\begin{matrix}q^{-k},q^{-x},-pq^k\\ q^{-M},0\end{matrix};q,q\right), \qquad
0\le k,x\le M.
\]
Their orthogonality weight on $\{0,1,\ldots,M\}$ is
\[
w_x = \frac{(q^{-M};q)_x}{(q;q)_x}(-p)^{-x}, \qquad x = 0,\ldots,M.
\]
Let $h_k$ be the squared norm of $K_k$ in this orthogonality relation and put
\[
\phi_{k,M}(x) := \frac{\sqrt{w_x}\,K_k(q^{-x};p,M;q)}{\sqrt{h_k}}, \qquad k,x = 0,\ldots,M.
\]
Then $\{\phi_{k,M}\}_{k=0}^M$ is an orthonormal basis of $\ell^2(\{0,\ldots,M\})$.  For $N\in\{0,\ldots,M\}$, the associated $N$-particle orthogonal polynomial ensemble has correlation kernel
\begin{equation}\label{eq:finite-kernel-projection-sum}
K_{M,N}(x,y) = \sum_{k=0}^{N-1}\phi_{k,M}(x)\phi_{k,M}(y).
\end{equation}
Thus the correlation operator is the orthogonal projection onto the span of the first $N$ polynomial modes.  This elementary observation is the starting point for the spectral-projection formulation used throughout the paper.

\subsection{The finite Jacobi operator}

The $q$-Krawtchouk polynomials satisfy the difference equation
\begin{equation}\label{eq:qk-finite-difference}
\Lambda_k y(x) = (1-q^{x-M})y(x+1) - \bigl[(1-q^{x-M}) - p(1-q^x)\bigr]y(x) - p(1-q^x)y(x-1),
\end{equation}
where $y(x) = K_k(q^{-x};p,M;q)$ and
\begin{equation}\label{eq:qk-finite-eigenvalue}
\Lambda_k = q^{-k}(1-q^k)(1+pq^k) = q^{-k}-1+p-pq^k.
\end{equation}
The weight ratio $\frac{w_{x+1}}{w_x} = \frac{1-q^{x-M}}{-p(1-q^{x+1})}$ shows that the operator in \eqref{eq:qk-finite-difference} is symmetric in the weighted space $\ell^2(\{0,\ldots,M\},w)$.  Conjugating by multiplication by $w_x^{1/2}$ transfers it to the standard Euclidean space $\ell^2(\{0,\ldots,M\})$.  We then reflect the right edge by the change of coordinates
\[
n = M-x.
\]
We denote by $H_{p,M}$ the resulting operator on $\ell^2(\{0,\ldots,M\})$.  Thus $H_{p,M}$ is unitarily equivalent to the original $q$-Krawtchouk difference operator, after symmetrization by the orthogonality weight and reflection at the right edge.  It is the real symmetric tridiagonal matrix with coefficients
\begin{equation}\label{eq:finite-reflected-offdiag}
\widetilde a_n^{(M)} = -\sqrt{p(q^{-n-1}-1)(1-q^{M-n})}, \qquad 0\le n<M,
\end{equation}
\begin{equation}\label{eq:finite-reflected-diag}
\widetilde b_n^{(M)} = q^{-n}-1 + p(1-q^{M-n}), \qquad 0\le n\le M.
\end{equation}
The sign in \eqref{eq:finite-reflected-offdiag} uses the standing assumption $0<q<1$ of this section.  If the same weight symmetrization and reflection are carried out for $q>1$, the two neighboring coefficients in \eqref{eq:qk-finite-difference} are positive in the interior and the ungauged reflected off-diagonal has the opposite sign.  We derive that sign explicitly in Section~\ref{sec:qgreater1-main}; the diagonal formula \eqref{eq:finite-reflected-diag} is unchanged. The normalized degree-$k$ eigenvector of $H_{p,M}$ is the reflected function
\[
\psi_{k,M}(n) = \phi_{k,M}(M-n),
\]
and
\[
H_{p,M}\psi_{k,M} = \Lambda_k \psi_{k,M}, \qquad k = 0,\ldots,M.
\]
Under the same reflection, \eqref{eq:finite-kernel-projection-sum} becomes the kernel identity
\[
K_{M,N}(m,n) = \sum_{k=0}^{N-1}\psi_{k,M}(m)\psi_{k,M}(n).
\]
From this point on, we use reflected coordinates for the kernel as well: thus, with $m=M-x$ and $n=M-y$, we continue to write
\[
K_{M,N}(m,n) := K_{M,N}(M-m,M-n),
\]
where on the right-hand side $K_{M,N}$ denotes the kernel in the original coordinates introduced in \eqref{eq:finite-kernel-projection-sum}. Viewed as an operator on $\ell^2(\{0,\ldots,M\})$, $K_{M,N}$ is therefore the orthogonal spectral projection of $H_{p,M}$ onto $\Span \{\psi_{0,M},\ldots,\psi_{N-1,M}\}$. Thus the finite correlation kernel is a spectral projection of the $q$-Krawtchouk Jacobi matrix.  Before introducing the right-edge normalization, we record a complementary finite spectral representation in the polynomial-degree variable.

\subsection{The finite dual transform}\label{subsec:finite-dual-transform}

The finite correlation projection also has a complementary description in the polynomial-degree variable.  Let
\[
\mathcal I_M := \{N-M,N-M+1,\ldots,N\},
\]
and reindex the polynomial degree by
\begin{equation}\label{eq:dual-index}
l = N-k.
\end{equation}
At the reflected endpoint $n=0$, the identity
\[
K_k(q^{-M};p,M;q) = (-pq^k)^k
\]
shows that $\sgn \psi_{N-l,M}(0) = (-1)^{N-l}$. For the later comparison with spectral weights, it is convenient to fix the remaining global sign so that the endpoint values are positive. We therefore define
\begin{equation}\label{eq:finite-dual-vectors}
v_{n,M}(l) := (-1)^{N+l}\psi_{N-l,M}(n), \qquad l\in\mathcal I_M,
\end{equation}
so that $v_{0,M}(l)>0$ for $l\in\mathcal I_M$. Since the matrix $(\psi_{k,M}(n))_{k,n=0}^M$ is orthogonal, the map
\begin{equation}\label{eq:finite-dual-transform}
\cU_M:\ell^2(\{0,\ldots,M\}) \longrightarrow \ell^2(\mathcal I_M), \qquad (\cU_M e_n)(l) = v_{n,M}(l),
\end{equation}
is unitary.

The $q$-Krawtchouk/dual-$q$-Krawtchouk duality of Koekoek--Lesky--Swarttouw is
\begin{equation}\label{eq:qk-duality-kls}
K_k(q^{-x};p,M;q) = \mathcal K_x(\lambda(k);-pq^M,M\mid q), \qquad \lambda(k) = q^{-k}-pq^k,
\end{equation}
where $\mathcal K$ denotes the dual $q$-Krawtchouk polynomial.  Thus \eqref{eq:finite-dual-transform} is the normalized dual $q$-Krawtchouk transform, followed by the alternating Jacobi gauge; see \cite[Remark at the end of \S14.17]{KLS}.

\begin{proposition}[Exact finite half-lattice projection]
\label{prop:finite-dual-projection}
For every finite $M,N$ in the $0<q<1$ regime,
\begin{equation}\label{eq:finite-dual-projection}
K_{M,N} = \cU_M^*\,\ind_{\{1,\ldots,N\}}\,\cU_M.
\end{equation}
In particular, the finite occupied degrees $k=0,\ldots,N-1$ are exactly the positive dual coordinates $l=1,\ldots,N$.
\end{proposition}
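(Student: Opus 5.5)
This is essentially bookkeeping: the projection onto the first $N$ eigenvectors of $H_{p,M}$ is rewritten in the dual coordinates. The plan is to compute $\cU_M K_{M,N}\cU_M^*$ directly and show it is multiplication by $\ind_{\{1,\ldots,N\}}$ on $\ell^2(\mathcal I_M)$.

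First I check that $\cU_M$ is unitary and identify its action. The matrix $(\psi_{k,M}(n))_{k,n=0}^M$ is orthogonal, because it is obtained from the orthogonal matrix $(\phi_{k,M}(x))$ by permuting the columns $x\mapsto M-x$. Composing with the reindexing $k=N-l$, which is a bijection $\{0,\ldots,M\}\to\mathcal I_M$, and with the diagonal sign matrix $(-1)^{N+l}$ preserves orthogonality. Hence $\cU_M$ is unitary. Writing $\delta_l$ for the canonical basis of $\ell^2(\mathcal I_M)$, we have
\[
\cU_M^*\delta_l=\sum_n v_{n,M}(l)e_n=(-1)^{N+l}\psi_{N-l,M}.
\]
So $\cU_M^*$ sends the dual basis vector $\delta_l$ to the eigenvector of degree $k=N-l$, up to a sign.

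Next I insert this into the right-hand side of \eqref{eq:finite-dual-projection}:
\[
\cU_M^*\ind_{\{1,\ldots,N\}}\cU_M=\sum_{l=1}^{N}(\cU_M^*\delta_l)(\cU_M^*\delta_l)^{*}=\sum_{l=1}^N\psi_{N-l,M}\psi_{N-l,M}^{*}.
\]
The signs $(-1)^{N+l}$ cancel because each one appears twice. The index set $\{1,\ldots,N\}$ lies inside $\mathcal I_M=\{N-M,\ldots,N\}$ since $N\ge 0$ and $N\le M$. As $l$ runs over $1,\ldots,N$, the degree $k=N-l$ runs over $0,\ldots,N-1$. The sum is therefore $\sum_{k=0}^{N-1}\psi_{k,M}\psi_{k,M}^*$, whose matrix entries are exactly the reflected kernel identity $K_{M,N}(m,n)=\sum_{k=0}^{N-1}\psi_{k,M}(m)\psi_{k,M}(n)$ stated earlier. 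The "in particular" clause is the same index correspondence, read off from \eqref{eq:dual-index}.

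No step is a real obstacle. The only point needing care is consistency of conventions: $\cU_M$ is defined through columns, $(\cU_M e_n)(l)=v_{n,M}(l)$, so the adjoint must be computed on $\delta_l$ as above. The sign gauge must also be checked to be a unitary diagonal factor, so that it drops out of the rank-one projections. The duality \eqref{eq:qk-duality-kls} and the endpoint sign computation are not needed for the identity itself. They only serve to interpret $\cU_M$ as a dual $q$-Krawtchouk transform and to make $v_{0,M}(l)>0$.
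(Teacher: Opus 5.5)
Your proof is correct and follows essentially the paper's argument. The paper reindexes $k=N-l$ in the sum $\sum_{k=0}^{N-1}\psi_{k,M}(m)\psi_{k,M}(n)$ to obtain $\sum_{l=1}^{N}v_{m,M}(l)v_{n,M}(l)$, noting that the signs $(-1)^{N+l}$ occur twice; your computation of $\cU_M^*\delta_l$ and the expansion into rank-one projections is the same bookkeeping written at the operator level.
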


\begin{proof}
Using \eqref{eq:dual-index} and \eqref{eq:finite-dual-vectors}, the finite projection sum becomes
\[
K_{M,N}(m,n) = \sum_{k=0}^{N-1}\psi_{k,M}(m)\psi_{k,M}(n) = \sum_{l=1}^{N}v_{m,M}(l)v_{n,M}(l).
\]
The alternating signs disappear because they occur twice.  This is \eqref{eq:finite-dual-projection}.
\end{proof}

This second exact representation of the finite correlation projection will be used below to pass directly to the fixed-$q$ edge limit.

\subsection{Common Hilbert space and embedding at the right edge}\label{subsec:common-hilbert-space}

All operator limits below are taken on the fixed Hilbert space
\[
\cH := \ellp.
\]
Let
\[
\cH_M := \ell^2(\{0,\ldots,M\}), \qquad \cH_M^\perp = \ell^2(\{M+1,M+2,\ldots\}),
\]
so that $\cH = \cH_M \oplus \cH_M^\perp$.  Whenever $T_M$ is a finite Jacobi matrix on $\cH_M$, we denote by
\[
\widehat T_M := T_M \oplus 0
\]
its bounded self-adjoint zero extension to $\cH$.  Thus every strong-resolvent statement below is an ordinary strong-resolvent limit of self-adjoint operators on this single Hilbert space.

If $E_{T_M}$ and $E_{\widehat T_M}$ denote the corresponding spectral resolutions, then for every Borel set $B\subset\R$,
\[
E_{\widehat T_M}(B) = E_{T_M}(B) \oplus \ind_B(0)I_{\cH_M^\perp}.
\]
In particular, whenever $0\notin B$, $\ind_B(\widehat T_M) = \ind_B(T_M) \oplus 0$. The correlation projections used below have $B=(0,\infty)$ for $0<q<1$ and $B=(-\infty,0)$ for $q>1$.  Hence the added zero eigenspace does not alter the finite correlation kernel; it only extends that projection by zero on $\cH_M^\perp$.  Likewise, because $e_0 \in\cH_M$, the scalar $e_0$-spectral measure of $\widehat T_M$ is exactly that of the finite block $T_M$.

The vector $e_n$ is independent of $M$ once $M\ge n$, and every finitely supported vector belongs to $\cH_M$ for all sufficiently large $M$.  Consequently coefficientwise convergence may be tested on the common dense subspace $c_{00} = c_{00}(\Z_+)\subset\cH$ of finitely supported sequences. For a fixed finitely supported vector, the zero extension agrees with the original finite Jacobi action once $M$ is sufficiently large.

We now turn to the normalization and coefficient limit in the main regime $0<q<1$.

\section{The \texorpdfstring{$0<q<1$}{0<q<1} right-edge scaling}
\label{sec:q-less-one-scaling}

We now specialize to the main fixed-$q$ regime
\[
\begin{gathered}
0<q<1, \qquad M \to \infty, \qquad N = N_M \to \infty, \qquad M-N_M \to \infty,\\
\rho_M := p_M q^{2N_M} \to \rho\in(0,\infty).
\end{gathered}
\]
As above, we write $N$ for $N_M$ when no confusion can arise.  The exact reflected coefficients from Section~\ref{sec:finite-spectral} show that the natural normalization at the right edge is obtained by centering at the degree-$N$ eigenvalue and scaling by $p_M^{-1/2}$.

\subsection{Centering and normalization}

Define the finite operator on $\cH_M$ by
\begin{equation}\label{eq:AM-definition}
A_M := p_M^{-1/2}(\Lambda_N I-H_{p_M,M}),
\end{equation}
and let $\widehat A_M = A_M \oplus 0$ on $\cH$ as in Section~\ref{subsec:common-hilbert-space}. The sign in this definition is chosen so that the occupied degrees lie on the positive side of the centered spectrum.  Indeed, from \eqref{eq:qk-finite-eigenvalue}, $\Lambda_{k+1}-\Lambda_k = (1-q)(q^{-k-1}+p_M q^k)>0$, so the eigenvalue of $A_M$ corresponding to the degree-$k$ mode is
\[
\eta_{k,M} = p_M^{-1/2}(\Lambda_N-\Lambda_k),
\]
and hence $\eta_{k,M}>0$ for $k<N$, $\eta_{N,M}=0$, and $\eta_{k,M}<0$ for $k>N$. Consequently, in the reflected coordinates of Section~\ref{sec:finite-spectral},
\begin{equation}\label{eq:finite-positive-projection}
\widehat K_{M,N} := K_{M,N} \oplus 0 = \ind_{(0,\infty)}(\widehat A_M).
\end{equation}
By Section~\ref{subsec:common-hilbert-space}, this zero extension leaves all matrix entries of the finite correlation kernel on $\cH_M$ unchanged.  Equation~\eqref{eq:finite-positive-projection} records the centered spectral-projection representation on the fixed Hilbert space.

Using \eqref{eq:finite-reflected-offdiag}--\eqref{eq:finite-reflected-diag}, the Jacobi coefficients of $A_M$ are
\[
a_n^{(M)} = \sqrt{(q^{-n-1}-1)(1-q^{M-n})}, \qquad 0\le n<M,
\]
and
\[
b_n^{(M)} = p_M^{-1/2}(q^{-N}-q^{-n}) + p_M^{1/2}(q^{M-n}-q^N), \qquad 0\le n\le M.
\]
Thus
\[
(A_M f)_n = a_n^{(M)}f_{n+1} + a_{n-1}^{(M)}f_{n-1} + b_n^{(M)}f_n,
\]
with the usual omission of the $f_{-1}$ term at the left endpoint.

\subsection{Coefficientwise limit}

\begin{proposition}\label{prop:q-less-one-coefficient-limit}
For every fixed $n\in\Z_+$,
\[
a_n^{(M)} \longrightarrow a_n := \sqrt{q^{-n-1}-1}, \qquad
b_n^{(M)} \longrightarrow c_\rho := \rho^{-1/2}-\rho^{1/2}.
\]
Equivalently, on the common dense domain $c_{00}\subset\ellp$ the finite Jacobi matrices converge coefficientwise to the formal Jacobi expression
\begin{equation}\label{eq:formal-qhermite-expression}
A_{\rho,\mathrm{formal}} = J+c_\rho I, \qquad (Jf)_n = a_n f_{n+1} + a_{n-1}f_{n-1}.
\end{equation}
\end{proposition}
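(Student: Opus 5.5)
The proof is a direct computation from the explicit formulas for $a_n^{(M)}$ and $b_n^{(M)}$ displayed just before the statement. We treat the two coefficient families separately.

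\textbf{Off-diagonal coefficients.} Fix $n$. For $M>n$,
\[
a_n^{(M)}=\sqrt{(q^{-n-1}-1)(1-q^{M-n})}.
\]
Since $0<q<1$ and $M\to\infty$, we have $q^{M-n}\to0$. Hence $a_n^{(M)}\to\sqrt{q^{-n-1}-1}=a_n$. Nothing else is needed here.

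\textbf{Diagonal coefficients.} We rewrite everything in terms of $\rho_M=p_Mq^{2N}$. Writing $p_M^{-1/2}=\rho_M^{-1/2}q^{N}$ gives
\[
p_M^{-1/2}(q^{-N}-q^{-n})=\rho_M^{-1/2}\bigl(1-q^{N-n}\bigr).
\]
Similarly, $p_M^{1/2}=\rho_M^{1/2}q^{-N}$ gives
\[
p_M^{1/2}(q^{M-n}-q^N)=\rho_M^{1/2}\bigl(q^{M-N-n}-1\bigr).
\]
Therefore
\[
b_n^{(M)}=\rho_M^{-1/2}(1-q^{N-n})+\rho_M^{1/2}(q^{M-N-n}-1).
\]
Now use the standing assumptions. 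Since $N\to\infty$, $q^{N-n}\to0$. Since $M-N\to\infty$, $q^{M-N-n}\to0$. Since $\rho_M\to\rho\in(0,\infty)$, both $\rho_M^{\pm1/2}$ converge to $\rho^{\pm1/2}$. Hence $b_n^{(M)}\to\rho^{-1/2}-\rho^{1/2}=c_\rho$.

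The only step needing any care is this rewriting of the diagonal entries. It is exactly where all three hypotheses ($N\to\infty$, $M-N\to\infty$, $\rho_M\to\rho$) enter. The two individually divergent pieces $p_M^{-1/2}q^{-N}$ and $p_M^{1/2}q^{N}$ combine into bounded quantities only through the scaling $p_Mq^{2N}\to\rho$.

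\textbf{Equivalent operator formulation.} Take any $f\in c_{00}$, supported in $\{0,\ldots,R\}$. Once $M>R+1$, $f$ lies in $\cH_M$, and by Section~\ref{subsec:common-hilbert-space} the vector $\widehat A_Mf$ equals $A_Mf$. This vector is supported in $\{0,\ldots,R+1\}$, and its entries are finite linear combinations of $a_{n}^{(M)},a_{n-1}^{(M)},b_n^{(M)}$ for $n\le R+1$.

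Coefficientwise convergence at these finitely many indices then gives
\[
\widehat A_Mf\to (J+c_\rho I)f \quad\text{in } \ellp.
\]
This is \eqref{eq:formal-qhermite-expression} on $c_{00}$.
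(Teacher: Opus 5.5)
Your proposal is correct and follows essentially the same route as the paper: $1-q^{M-n}\to1$ for the off-diagonal coefficients, and rewriting the diagonal entries through $\rho_M=p_Mq^{2N}$ so that the terms $\rho_M^{-1/2}q^{N-n}$ and $\rho_M^{1/2}q^{M-N-n}$ vanish. Your explicit check of convergence on $c_{00}$ is a harmless addition to what the paper states simply as an equivalent reformulation.
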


\begin{proof}
For fixed $n$, the first limit follows immediately from $M-n\to\infty$: $1-q^{M-n} \longrightarrow 1$. For the diagonal coefficient, the scaling assumption gives
\[
p_M^{-1/2}q^{-N} = \rho_M^{-1/2} \longrightarrow \rho^{-1/2}, \qquad
p_M^{1/2}q^N = \rho_M^{1/2} \longrightarrow \rho^{1/2}.
\]
Moreover, for fixed $n$, $p_M^{-1/2}q^{-n} = \rho_M^{-1/2}q^{N-n} \longrightarrow 0$, while $p_M^{1/2}q^{M-n} = \rho_M^{1/2}q^{M-N-n} \longrightarrow 0$, because both $N\to\infty$ and $M-N\to\infty$.  Substituting these limits into the exact coefficients proves the claim.
\end{proof}

\subsection{Continuous \texorpdfstring{$q^{-1}$}{q-inverse}-Hermite identification}

The continuous $q^{-1}$-Hermite polynomials satisfy the recurrence
\[
2x h_n(x\mid q) = h_{n+1}(x\mid q) + q^{-n}(1-q^n)h_{n-1}(x\mid q);
\]
see, for example, \cite{CK,IM}.  After orthonormalization, multiplication by $2x$ has Jacobi off-diagonal coefficients $\sqrt{q^{-n-1}-1} = a_n$. Thus the unshifted expression $J$ in \eqref{eq:formal-qhermite-expression} is precisely the orthonormal continuous $q^{-1}$-Hermite Jacobi expression.  Writing its spectral variable as $s=2x$, the corresponding orthonormal first-kind polynomials are
\begin{equation}\label{eq:Pn-cont-qinv-Hermite}
P_n(s) = \frac{q^{n(n+1)/4}}{\sqrt{(q;q)_n}}h_n(s/2\mid q).
\end{equation}
The limiting formal expression $A_{\rho,\mathrm{formal}}$ is the scalar shift of $J$ by $c_\rho = \rho^{-1/2}-\rho^{1/2}$.

Let $p_n^{(M)}$ denote the orthonormal first-kind polynomial determined by the Jacobi coefficients of $A_M$, normalized by $p_0^{(M)}=1$, and let $p_n^{(\rho)}$ denote the polynomial determined by the limiting coefficients in Proposition~\ref{prop:q-less-one-coefficient-limit}.  For every fixed $n$, the three-term recurrence involves only finitely many Jacobi coefficients, so Proposition~\ref{prop:q-less-one-coefficient-limit} gives $p_n^{(M)} \longrightarrow p_n^{(\rho)}$ coefficientwise as polynomials.  Since $A_{\rho,\mathrm{formal}} = J+c_\rho I$, the two limiting polynomials are related by
\begin{equation}\label{eq:shifted-first-kind-polynomials}
p_n^{(\rho)}(s+c_\rho) = P_n(s).
\end{equation}

Proposition~\ref{prop:q-less-one-coefficient-limit} therefore identifies both the local formal Jacobi limit and every fixed-degree first-kind polynomial limit.  It does not, however, yet select a self-adjoint realization of the limiting Jacobi expression: the continuous $q^{-1}$-Hermite moment problem is indeterminate; see Section~\ref{sec:selected-extension} below.

\section{Direct convergence of the correlation projections}
\label{sec:kernel-convergence}

We now use the finite dual representation of Proposition~\ref{prop:finite-dual-projection} to obtain the limiting correlation projection directly.

For $l\in\Z$, set
\begin{equation}\label{eq:mu-l-explicit}
s_l := \rho^{1/2}q^{-l} - \rho^{-1/2}q^l, \qquad
\mu_l := \frac{\rho^{-2l}(1+\rho^{-1}q^{2l})q^{l(2l-1)}}
{(-\rho^{-1},-q\rho,q;q)_\infty}.
\end{equation}

\subsection{Convergence of the spectral weights}

For $l\in\mathcal I_M$, define
\[
\pi_M(l) := v_{0,M}(l)^2 = |\psi_{N-l,M}(0)|^2.
\]
Thus $\pi_M(l)$ is the $e_0$-spectral weight of $A_M$ at the eigenvalue corresponding to degree $N-l$.  Using the endpoint identity from Section~\ref{subsec:finite-dual-transform} and the standard squared norm in \cite[Eq.~(14.15.2)]{KLS}, with the polynomial degree set equal to $N-l$, gives
\begin{equation}\label{eq:finite-dual-weight-formula}
\pi_M(l) = \frac{(q^{M-N+l+1};q)_{N-l}}{(q;q)_{N-l}} \frac{1+\rho_M q^{-2l}}
{(-q^{2l+1}/\rho_M;q)_{N-l} (-\rho_M q^{-2l};q)_{M-N+l+1}}.
\end{equation}

\begin{proposition}[Fixed-coordinate spectral weights]
\label{prop:finite-dual-weight-limit}
For every fixed $l\in\Z$,
\begin{equation}\label{eq:finite-dual-weight-limit}
\pi_M(l) \longrightarrow \mu_l.
\end{equation}
\end{proposition}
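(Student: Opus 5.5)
Since $l$ is fixed while $N\to\infty$ and $M-N\to\infty$, every finite $q$-Pochhammer symbol in \eqref{eq:finite-dual-weight-formula} has length tending to infinity. The plan is to take the limit factor by factor. All but one factor converge trivially. The one exception is $(-q^{2l+1}/\rho_M;q)_{N-l}$ when $l<0$, which has finitely many large factors that must be handled explicitly.

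The straightforward factors behave as follows.
\begin{itemize}
\item $(q^{M-N+l+1};q)_{N-l}\to 1$, because it is squeezed between $1$ and $(q^{M-N+l+1};q)_\infty\to1$ once $M-N+l+1\to\infty$.
\item $(q;q)_{N-l}\to(q;q)_\infty$.
\item $1+\rho_Mq^{-2l}\to 1+\rho q^{-2l}$.
\item $(-\rho_Mq^{-2l};q)_{M-N+l+1}\to(-\rho q^{-2l};q)_\infty$. This holds because the partial products converge uniformly in the parameter, for the parameter in a compact set, via the bound $\sum|x|q^j<\infty$, and $\rho_M\to\rho$.
\end{itemize}
For the remaining factor, write $(-q^{2l+1}/\rho_M;q)_{N-l}\to(-q^{2l+1}/\rho;q)_\infty$ by the same uniform convergence. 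Thus
\[
\pi_M(l)\to \frac{1+\rho q^{-2l}}{(q;q)_\infty(-q^{2l+1}/\rho;q)_\infty(-\rho q^{-2l};q)_\infty}.
\]
This is legitimate for every fixed $l\in\Z$, positive or negative: the infinite products are finite and nonzero, since all factors are positive.

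The remaining work is to identify this expression with $\mu_l$ in \eqref{eq:mu-l-explicit}, using the shift identities of $q$-Pochhammer symbols. For $l\ge0$,
\[
(-\rho q^{-2l};q)_\infty=(-\rho q^{-2l};q)_{2l}\,(-\rho;q)_\infty,
\qquad
(-\rho q^{-2l};q)_{2l}=\prod_{j=1}^{2l}(1+\rho q^{-j}) = \rho^{2l}q^{-l(2l+1)}(-q/\rho;q)_{2l}.
\]
Similarly,
\[
(-q^{2l+1}/\rho;q)_\infty=\frac{(-q/\rho;q)_\infty}{(-q/\rho;q)_{2l}}.
\]
The $(-q/\rho;q)_{2l}$ factors cancel, leaving
\[
\frac{(1+\rho q^{-2l})\,\rho^{-2l}q^{l(2l+1)}}{(q,-q/\rho,-\rho;q)_\infty}.
\]
Next, rewrite $1+\rho q^{-2l}=\rho q^{-2l}(1+\rho^{-1}q^{2l})$ and use $(-\rho;q)_\infty=(1+\rho)(-q\rho;q)_\infty$ and $(-1/\rho;q)_\infty=(1+\rho^{-1})(-q/\rho;q)_\infty$. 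Then $\rho/(1+\rho)=1/(1+\rho^{-1})$, which converts $(-q/\rho;q)_\infty$ into $(-\rho^{-1};q)_\infty$. The result matches $\rho^{-2l}(1+\rho^{-1}q^{2l})q^{l(2l-1)}/(-\rho^{-1},-q\rho,q;q)_\infty$.

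For $l<0$, the same identity holds by the analogous computation with the roles of the two products swapped. Alternatively, one can check that both sides satisfy the same first-order ratio relation $f(l+1)/f(l)$ and agree at $l=0$, which handles all $l\in\Z$ uniformly. The recurrence check is the cleanest route, and it is the step I would verify carefully.

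The only genuine obstacle is bookkeeping in this final identification: the exponent $q^{l(2l-1)}$ versus $q^{l(2l+1)}$, and the factor $\rho^{\pm1}$ absorbed via $1+\rho^{-1}q^{2l}$. The limit itself is routine because every $q$-Pochhammer symbol is a convergent product with positive factors.
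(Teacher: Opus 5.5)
Your proposal is correct and follows the paper's argument: you pass to the limit factor by factor in \eqref{eq:finite-dual-weight-formula}, obtaining infinite products and $(q^{M-N+l+1};q)_{N-l}\to1$, and then identify the result with $\mu_l$ through $q$-shift identities. Your explicit $l\ge0$ bookkeeping and the ratio check $\mu_{l+1}/\mu_l=\rho^{-2}q^{4l+1}(\rho+q^{2l+2})/(\rho+q^{2l})$, which covers all $l\in\Z$, are both correct and simply fill in what the paper calls ``elementary $q$-shift identities.''
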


\begin{proof}
For fixed $l$, $N-l \longrightarrow \infty$, $M-N+l \longrightarrow \infty$, and $\rho_M \longrightarrow \rho$. Hence the finite products in \eqref{eq:finite-dual-weight-formula} converge to the corresponding infinite products, while $(q^{M-N+l+1};q)_{N-l} \longrightarrow 1$. Elementary $q$-shift identities for the two remaining infinite products then give exactly the expression for $\mu_l$ in \eqref{eq:mu-l-explicit}.
\end{proof}

\subsection{Convergence of the dual eigenvectors}

The reindexing \eqref{eq:dual-index} gives the exact centered eigenvalue
\[
\eta_{N-l,M} = \rho_M^{1/2}(q^{-l}-1) + \rho_M^{-1/2}(1-q^l),
\]
and therefore, for every fixed $l$,
\begin{equation}\label{eq:finite-dual-centered-eigenvalue-limit}
\eta_{N-l,M} \longrightarrow \lambda_l := s_l+c_\rho.
\end{equation}
Since $v_{0,M}(l)>0$, the normalized eigenvector at $\eta_{N-l,M}$ satisfies the exact identity
\begin{equation}\label{eq:finite-dual-polynomial-representation}
v_{n,M}(l) = \sqrt{\pi_M(l)}\, p_n^{(M)}(\eta_{N-l,M}).
\end{equation}
Combining \eqref{eq:finite-dual-weight-limit}, \eqref{eq:finite-dual-centered-eigenvalue-limit}, and the fixed-degree polynomial convergence from Section~\ref{sec:q-less-one-scaling}, including \eqref{eq:shifted-first-kind-polynomials}, gives
\[
v_{n,M}(l) \longrightarrow u_n(l) := \sqrt{\mu_l}\,P_n(s_l) \qquad (n,l\ \text{fixed}).
\]

The continuous $q^{-1}$-Hermite orthogonality and completeness of Christiansen--Koelink \cite[Theorem~3.9 and Eq.~(4.1)]{CK} show that $\{u_n\}_{n\ge0}$ is an orthonormal basis of $\ellZ$.  Their orthogonality formula is stated for a representative parameter in $(q,1]$; multiplying that parameter by a power of $q$ merely translates the lattice index $l$, so the same orthogonality and completeness hold for the present value $\rho^{-1/2}$.

We regard each $v_{n,M}$ as an element of $\ellZ$ by zero extension outside $\mathcal I_M$.  Finite dual orthogonality gives $\|v_{n,M}\|_{\ellZ} = 1$ and $\|u_n\|_{\ellZ} = 1$. The coordinatewise convergence above, together with the uniform norm bound, implies $v_{n,M} \rightharpoonup u_n$ in $\ellZ$. Since the norms converge as well, we conclude that
\begin{equation}\label{eq:finite-dual-vector-strong-limit}
v_{n,M} \longrightarrow u_n \qquad \text{strongly in }\ellZ
\end{equation}
for every fixed $n$.

Define the limiting dual transform by
\begin{equation}\label{eq:bc-transform}
\cU_\rho:\ellp \longrightarrow \ellZ, \qquad \cU_\rho e_n = u_n.
\end{equation}
By the completeness just recalled, $\cU_\rho$ is unitary.

\subsection{Convergence of the correlation projections}

Extend the finite dual transforms to operators $V_M:\ellp \longrightarrow \ellZ$ by
\[
V_M e_n =
\begin{cases}
v_{n,M},&0\le n\le M,\\
0,&n>M.
\end{cases}
\]
Then $\|V_M\|\le1$, and \eqref{eq:finite-dual-vector-strong-limit} gives convergence on every basis vector.  Hence
\[
V_M \longrightarrow \cU_\rho \qquad \text{strongly}.
\]

The adjoints converge strongly as well.  Indeed, $V_M^*V_M=\Pi_M$, where $\Pi_M$ is the orthogonal projection of $\ellp$ onto $\Span \{e_0,\ldots,e_M\}$, so $\Pi_M \to I$ strongly.  For $y=\cU_\rho x$,
\[
\begin{aligned}
\|V_M^*y - \cU_\rho^*y\| & = \|V_M^*\cU_\rho x - x\|\\
&\le
\|\cU_\rho x - V_M x\| +
\|\Pi_M x - x\|
\longrightarrow 0.
\end{aligned}
\]
Since $\cU_\rho$ is onto, $V_M^* \longrightarrow \cU_\rho^*$ strongly.

By Proposition~\ref{prop:finite-dual-projection}, after zero extension to the common Hilbert space,
\[
\widehat K_{M,N} = V_M^*\ind_{\{l>0\}}V_M.
\]
Define
\begin{equation}\label{eq:bc-half-lattice-projection}
K_\rho := \cU_\rho^*\ind_{\{l>0\}}\cU_\rho.
\end{equation}

\begin{theorem}[Direct convergence of the correlation projections]
\label{thm:projection-convergence}
Under the scaling assumptions of Section~\ref{sec:q-less-one-scaling},
\begin{equation}\label{eq:projection-convergence}
\widehat K_{M,N} \longrightarrow K_\rho \qquad \text{strongly on }\ellp.
\end{equation}
\end{theorem}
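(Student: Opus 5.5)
The proof combines the two strong limits established just above: $V_M\to\cU_\rho$ and $V_M^*\to\cU_\rho^*$ strongly. It also uses the exact factorization $\widehat K_{M,N}=V_M^*\ind_{\{l>0\}}V_M$, which follows from Proposition~\ref{prop:finite-dual-projection} after zero extension. Write $P:=\ind_{\{l>0\}}$ for the fixed orthogonal projection on $\ellZ$. Then $K_\rho=\cU_\rho^*P\cU_\rho$ by \eqref{eq:bc-half-lattice-projection}.

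Before the convergence argument, the factorization needs one check. For $n\le M$ we have $V_Me_n=v_{n,M}$, supported in $\mathcal I_M$. Hence $PV_Me_n$ is the restriction of $v_{n,M}$ to $l\in\{1,\ldots,N\}$, since $\mathcal I_M\cap\{l>0\}=\{1,\ldots,N\}$. Moreover $V_M^*$ maps $\ell^2(\mathcal I_M)$ onto $\cH_M$ as $\cU_M^*$ and kills $\ell^2(\Z\setminus\mathcal I_M)$. Hence $V_M^*PV_M$ equals $\cU_M^*\ind_{\{1,\ldots,N\}}\cU_M\oplus0$, which is $\widehat K_{M,N}$.

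Now fix $x\in\ellp$ and estimate by a three-term telescoping:
\[
\|V_M^*PV_Mx-\cU_\rho^*P\cU_\rho x\|\le \|V_M^*P(V_Mx-\cU_\rho x)\|+\|(V_M^*-\cU_\rho^*)P\cU_\rho x\|.
\]
The first term is at most $\|V_Mx-\cU_\rho x\|$, because $\|V_M^*\|\le1$ and $\|P\|=1$; it therefore tends to zero by the strong convergence $V_M\to\cU_\rho$. The second term tends to zero because $V_M^*\to\cU_\rho^*$ strongly, applied to the fixed vector $y=P\cU_\rho x\in\ellZ$. This proves \eqref{eq:projection-convergence}.

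The argument has no real obstacle, since the substantive inputs were established beforehand. These are the norm-convergence upgrade \eqref{eq:finite-dual-vector-strong-limit}, which relies on Christiansen--Koelink completeness to make $\cU_\rho$ unitary, and the surjectivity argument for the adjoints. The one point needing care is that strong convergence of products requires uniform boundedness of the left factor; here that is supplied by the contraction bound $\|V_M^*\|\le1$.

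The matrix-entry statement follows by taking inner products with $e_m$ and $e_n$. This gives $K_\rho(m,n)=\langle P u_n,u_m\rangle=\sum_{l\ge1}\mu_lP_m(s_l)P_n(s_l)$, matching the formula in Theorem~\ref{thm:intro-main-qsmall}.
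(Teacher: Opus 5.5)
Your proposal is correct and follows the paper's proof. The paper also writes $\widehat K_{M,N}=V_M^*\ind_{\{l>0\}}V_M$ and deduces strong convergence from $V_M\to\cU_\rho$ and $V_M^*\to\cU_\rho^*$; your two-term estimate with $\|V_M^*\|\le 1$ and your check of the zero-extended factorization spell out the steps the paper leaves implicit.
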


\begin{proof}
Since $\ind_{\{l>0\}}$ is a bounded multiplication operator, the strong convergences $V_M \to\cU_\rho$ and $V_M^* \to \cU_\rho^*$ imply
\[
V_M^*\ind_{\{l>0\}}V_M \longrightarrow \cU_\rho^*\ind_{\{l>0\}}\cU_\rho
\]
strongly.  This is \eqref{eq:projection-convergence}.
\end{proof}

Taking matrix entries gives the explicit limiting kernel
\begin{equation}\label{eq:spectral-series-kernel}
K_\rho(m,n) = \sum_{l=1}^{\infty} \mu_l P_m(s_l)P_n(s_l), \qquad m,n\in\Z_+.
\end{equation}
Consequently, for every fixed $m,n$, $K_{M,N}(m,n) \longrightarrow K_\rho(m,n)$.

The argument above determines the limiting correlation projection directly from the finite dual transform.  In the next section we return to the formal Jacobi limit of Section~\ref{sec:q-less-one-scaling} and identify the self-adjoint realization selected by the same limiting spectral data.

\section{The selected self-adjoint realization}
\label{sec:selected-extension}

\subsection{Spectral realization of the direct limit}

We now identify the operator-theoretic meaning of the limiting transform and correlation projection constructed in Section~\ref{sec:kernel-convergence}. Set
\begin{equation}\label{eq:limiting-centered-spectrum}
\lambda_l := s_l+c_\rho, \qquad l\in\Z.
\end{equation}
Let $\mathsf M_\rho$ be the multiplication operator on $\ellZ$ given by
\[
(\mathsf M_\rho f)(l) = \lambda_l f(l), \qquad
D(\mathsf M_\rho) = \left\{f\in\ellZ: \sum_{l\in\Z}\lambda_l^2|f(l)|^2<\infty\right\}.
\]
Since $\mathsf M_\rho$ is self-adjoint and $\cU_\rho$ is unitary,
\begin{equation}\label{eq:selected-extension-spectral-definition}
A_\rho := \cU_\rho^*\mathsf M_\rho\cU_\rho
\end{equation}
is self-adjoint on $\ellp$.

This operator realizes the formal Jacobi expression obtained in Section~\ref{sec:q-less-one-scaling}.  Indeed, the continuous $q^{-1}$-Hermite recurrence gives
\[
sP_n(s) = a_n P_{n+1}(s) + a_{n-1}P_{n-1}(s), \qquad a_{-1} := 0,
\]
where $a_n = \sqrt{q^{-n-1}-1}$.  Hence, using $u_n(l) = \sqrt{\mu_l}\,P_n(s_l)$ and $\lambda_l = s_l+c_\rho$,
\[
\mathsf M_\rho u_n = a_n u_{n+1} + c_\rho u_n + a_{n-1}u_{n-1}.
\]
Equivalently,
\[
A_\rho e_n = a_n e_{n+1} + c_\rho e_n + a_{n-1}e_{n-1}, \qquad n\ge0.
\]
Thus $A_\rho$ agrees on finitely supported vectors with $A_{\rho,\mathrm{formal}}$, and is therefore a self-adjoint realization of the limiting Jacobi expression.

The same spectral representation identifies its $e_0$-spectral measure without any further calculation.  Since $P_0 \equiv1$, $(\cU_\rho e_0)(l) = u_0(l) = \sqrt{\mu_l}$, and consequently the spectral measure of $A_\rho$ at $e_0$ is
\begin{equation}\label{eq:selected-spectral-measure}
\sum_{l\in\Z}\mu_l\,\delta_{\lambda_l}.
\end{equation}
In particular, $\lambda_0=0$.  Since $s_l$ is strictly increasing in $l$, $\lambda_l>0 \Longleftrightarrow l>0$. Therefore
\begin{equation}\label{eq:limiting-correlation-operator}
\ind_{(0,\infty)}(A_\rho) = \cU_\rho^*\ind_{\{l>0\}}\cU_\rho = K_\rho.
\end{equation}
Thus the correlation projection obtained directly in Section~\ref{sec:kernel-convergence} is the positive spectral projection of a self-adjoint realization of the formal Jacobi limit.  The remaining question is why this particular realization is distinguished among the self-adjoint extensions of the limit-circle Jacobi operator; we address this next.

\subsection{The limit-circle characterization}

We next identify the self-adjoint realization constructed above within the one-parameter family associated with the limiting Jacobi expression. Let $J_0$ be the operator defined by $J$ in \eqref{eq:formal-qhermite-expression} on $c_{00}(\Z_+)$, and put
\[
J_{\min} := \overline{J_0}, \qquad J_{\max} := J_{\min}^*.
\]
For $0<q<1$, the Hamburger moment problem for the continuous $q^{-1}$-Hermite polynomials is indeterminate; see Ismail--Masson \cite{IM}. By the equivalence between determinacy of the Hamburger moment problem and essential self-adjointness of the associated minimal Jacobi operator, $J_{\min}$ has deficiency indices
\[
n_+(J_{\min}) = n_-(J_{\min}) = 1;
\]
see Simon~\cite[Theorem~2; Theorems~2.10 and~2.12]{Simon}. Equivalently, the Jacobi expression is limit-circle at $+\infty$. The shifted minimal operator
\[
A_{\rho,\min} := J_{\min}+c_\rho I
\]
has the same deficiency indices and hence a one-parameter family of self-adjoint extensions.

For $z\in\C$, the first-kind polynomials introduced in Section~\ref{sec:q-less-one-scaling} form the solution
\[
p^{(\rho)}(z) := \{p_n^{(\rho)}(z)\}_{n\ge0}
\]
of
\[
A_{\rho,\mathrm{formal}}p^{(\rho)}(z) = zp^{(\rho)}(z), \qquad p_0^{(\rho)}(z) = 1.
\]
In the limit-circle case $p^{(\rho)}(z)\in\ellp$ for every $z\in\C$; see Simon~\cite[Theorem~3]{Simon}.

The spectral realization of the preceding subsection shows directly which extension occurs here. Since $\lambda_0=0$, the vector $\cU_\rho^*\delta_0$ is a zero eigenvector of $A_\rho$. Moreover, using $s_0=-c_\rho$ and \eqref{eq:shifted-first-kind-polynomials},
\[
\langle e_n,\cU_\rho^*\delta_0\rangle = u_n(0) = \sqrt{\mu_0}\,P_n(s_0) =
\sqrt{\mu_0}\,p_n^{(\rho)}(0).
\]
Hence
\begin{equation}\label{eq:selected-zero-mode}
\cU_\rho^*\delta_0 = \sqrt{\mu_0}\,p^{(\rho)}(0),
\end{equation}
so $A_\rho$ is a self-adjoint extension of $A_{\rho,\min}$ containing the square-summable formal zero mode.

For completeness, we record its boundary characterization. Put
\[
D(A_{\rho,\max}) = \{f\in\ellp: A_{\rho,\mathrm{formal}}f\in\ellp\}, \qquad
A_{\rho,\max} := A_{\rho,\min}^*,
\]
and define the discrete Wronskian
\[
W(f,g)(n) := a_n \bigl(f_{n+1}g_n - f_n g_{n+1}\bigr).
\]
For $f,g\in D(A_{\rho,\max})$, Green's identity gives the boundary form
\[
W_\infty(\overline f,g) := \lim_{n\to\infty}W(\overline f,g)(n);
\]
see Simon~\cite[Theorem~2.7, especially (2.8) and (2.14)]{Simon}. The extension $A_\rho$ is therefore characterized by
\begin{equation}\label{eq:zero-boundary-extension}
D(A_\rho) = \{f\in D(A_{\rho,\max}): W_\infty(\overline f,p^{(\rho)}(0)) = 0\}.
\end{equation}
Equivalently,
\[
D(A_\rho) = D(A_{\rho,\min}) + \C p^{(\rho)}(0);
\]
compare Simon~\cite[(4.20b) and (4.24)]{Simon}.

Distinct self-adjoint extensions have disjoint spectra, and every real point belongs to the spectrum of exactly one such extension; see Simon~\cite[Theorem~4.11]{Simon}. Thus $A_\rho$ is the unique self-adjoint extension of $A_{\rho,\min}$ having $0$ as an eigenvalue. The next subsection identifies the corresponding mass $\mu_0$ as the maximal possible atom at zero among all representing measures of the limiting moment problem.

\subsection{The maximal zero atom}

The orthogonality and completeness used in Section~\ref{sec:kernel-convergence} show that $\sum_{l\in\Z}\mu_l\,\delta_{s_l}$ is an N-extremal measure for the unshifted continuous $q^{-1}$-Hermite moment problem.  Translation by $c_\rho$ preserves this property, so by \eqref{eq:selected-spectral-measure} the $e_0$-spectral measure
\[
\nu_\rho := \sum_{l\in\Z}\mu_l\,\delta_{\lambda_l}
\]
of $A_\rho$ is N-extremal for the shifted moment problem.

Let $\mathfrak M_\rho$ denote the set of all representing measures for this shifted moment problem, and define
\[
M_\rho(x) := \sup_{\nu\in\mathfrak M_\rho}\nu(\{x\}), \qquad x\in\R.
\]
For an indeterminate Hamburger moment problem, Simon's Theorem~5 states that for every real $x$ there is a unique N-extremal measure carrying an atom at $x$, and that its mass there is strictly larger than the mass assigned to $x$ by any other representing measure.  Moreover, if $p_n^{(\rho)}(x)$ are the first-kind orthonormal polynomials of the shifted problem, then Simon's Theorem~4.11 and Eq.~(4.25) give
\begin{equation}\label{eq:maximal-atom-formula-rho}
M_\rho(x) = \left(\sum_{n=0}^{\infty}|p_n^{(\rho)}(x)|^2\right)^{-1};
\end{equation}
see \cite[Theorem~4.11, Eq.~(4.25), and Theorem~5]{Simon}.

\begin{proposition}[Maximal zero atom]\label{prop:candidate-extension}
The spectral measure of $A_\rho$ saturates the universal maximal-atom bound at zero:
\begin{equation}\label{eq:selected-zero-mass}
\mu_0 = M_\rho(0) = \left(\sum_{n=0}^{\infty}|p_n^{(\rho)}(0)|^2\right)^{-1}.
\end{equation}
\end{proposition}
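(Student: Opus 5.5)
The plan is to compute $\sum_n |p_n^{(\rho)}(0)|^2$ directly from the unitary transform $\cU_\rho$; no explicit $q$-series evaluation will be needed. The key input is the zero-mode identity \eqref{eq:selected-zero-mode}, namely $\cU_\rho^*\delta_0 = \sqrt{\mu_0}\,p^{(\rho)}(0)$. Since $\cU_\rho$ is unitary (completeness of $\{u_n\}$ from Christiansen--Koelink, as recalled in Section~\ref{sec:kernel-convergence}), we have $\|\cU_\rho^*\delta_0\|_{\ellp} = \|\delta_0\|_{\ellZ} = 1$. Coordinatewise this reads
\[
\sum_{n\ge0}|u_n(0)|^2 = \mu_0\sum_{n\ge0}|P_n(s_0)|^2 = \mu_0\sum_{n\ge0}|p_n^{(\rho)}(0)|^2 = 1,
\]
where we use $s_0=-c_\rho$ and \eqref{eq:shifted-first-kind-polynomials}. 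Hence $\mu_0 = \bigl(\sum_n|p_n^{(\rho)}(0)|^2\bigr)^{-1}$. Equivalently, the $l=0$ row of the unitary matrix $(u_n(l))$ has unit norm; this is the dual orthogonality relation.

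Next I identify this quantity with $M_\rho(0)$ by applying Simon's formula \eqref{eq:maximal-atom-formula-rho} to the shifted moment problem. Translation by $c_\rho$ maps representing measures of the unshifted continuous $q^{-1}$-Hermite problem bijectively onto those of the shifted one, and the orthonormal polynomials transform accordingly. Indeterminacy is therefore preserved, and the formula applies at $x=0$. Combining the two displays gives \eqref{eq:selected-zero-mass}. As a consistency check, \eqref{eq:selected-spectral-measure} shows that $\nu_\rho$ is a representing measure with atom $\mu_0$ at $0$, so $\mu_0\le M_\rho(0)$ holds trivially. Equality is also what Simon's Theorem~5 predicts for the N-extremal measure, since $A_\rho$ is the extension with eigenvalue $0$.

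The only delicate step is justifying unitarity, specifically surjectivity of $\cU_\rho$, that is, completeness of $\{u_n\}$ in $\ellZ$. Isometry alone gives only $\|\cU_\rho^*\delta_0\|\le 1$, which yields the inequality $\mu_0\sum|p_n(0)|^2\le1$ and no more. Surjectivity has already been imported from \cite{CK}, including the remark that translating the parameter by powers of $q$ shifts the lattice index. So I would simply invoke it. As an alternative route, one could note that $\nu_\rho$ is N-extremal, which is equivalent to density of polynomials in $L^2(\nu_\rho)$, and then apply Simon's Theorem~4.11 that an N-extremal measure's mass at each of its atoms equals $(\sum|p_n(x)|^2)^{-1}$.
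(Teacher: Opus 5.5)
Your proof is correct, but it reaches the key equality by a different mechanism than the paper. The paper's proof is structural. It uses completeness of $\{u_n\}$ from \cite{CK} to say that $\nu_\rho$ is N-extremal, notes that $\nu_\rho$ has an atom at $\lambda_0=0$, and invokes Simon's Theorem~5 (uniqueness and maximality of the N-extremal measure carrying a given point) to identify that atom with $M_\rho(0)$. Formula \eqref{eq:maximal-atom-formula-rho} then supplies the series.

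You instead obtain $\mu_0\sum_n|p_n^{(\rho)}(0)|^2=1$ directly as Parseval's identity for $\cU_\rho^*\delta_0$, that is, the unit norm of the $l=0$ row of the unitary matrix $(u_n(l))$. You cite Simon only for the identification $M_\rho(0)=\bigl(\sum_n|p_n^{(\rho)}(0)|^2\bigr)^{-1}$. Both routes rest on the same input, since surjectivity of $\cU_\rho$ is exactly N-extremality of $\nu_\rho$. Yours makes transparent that saturation is just completeness of the dual basis tested on $\delta_0$, and you correctly note that isometry alone would give only an inequality.

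The identification step can even be made elementary. Apply Bessel's inequality in $L^2(\nu)$ to $\mathbf 1_{\{0\}}$, whose Fourier coefficients are $\nu(\{0\})p_n^{(\rho)}(0)$. This gives $\nu(\{0\})\sum_n|p_n^{(\rho)}(0)|^2\le 1$ for every representing measure $\nu$, so your computation shows the supremum is attained by $\nu_\rho$ with no appeal to Simon. In exchange, the paper's route also yields the uniqueness half of Theorem~5: $\nu_\rho$ is the only representing measure with this mass at $0$. That is not needed for the proposition, but it is what the selection principle of Section~\ref{sec:selection-principle} uses.
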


\begin{proof}
By \eqref{eq:limiting-centered-spectrum}, $\lambda_0=0$, so the N-extremal spectral measure $\nu_\rho$ contains zero with mass $\mu_0$.  The uniqueness and maximality statement in Simon's Theorem~5, together with \eqref{eq:maximal-atom-formula-rho}, therefore gives \eqref{eq:selected-zero-mass}.
\end{proof}

\section{A maximal-atom selection principle}
\label{sec:selection-principle}

The direct argument of Section~\ref{sec:kernel-convergence} used the full fixed-coordinate profile $\pi_M(l) \longrightarrow \mu_l$ for fixed $l\in\Z$ to obtain the limiting correlation projection. For selection of the corresponding self-adjoint realization, however, only a single spectral mass is needed. By Proposition~\ref{prop:finite-dual-weight-limit} at $l=0$ and Proposition~\ref{prop:candidate-extension},
\[
\pi_M(0) \longrightarrow \mu_0 = M_\rho(0),
\]
so the atom surviving at zero converges to the maximal mass permitted there by the limiting moment problem. We isolate this mechanism abstractly and show that this single saturating atom already forces strong-resolvent convergence to the selected realization. The criterion is sharp in the following sense: by Simon's equality case, saturation of the maximal-atom bound at the limiting point is equivalent to the limiting measure being the unique N-extremal measure whose support contains that point. Thus the single-atom principle captures precisely the situation in which the surviving atom selects an N-extremal spectral measure, as it does at the $q$-Krawtchouk edge.

Let $T_M$ be finite Jacobi matrices on $\cH_M$, let $\widehat T_M = T_M \oplus 0$ on $\cH$ as in Section~\ref{subsec:common-hilbert-space}, and let $\nu_M$ denote the spectral measure of $T_M$ corresponding to the cyclic vector $e_0$, equivalently the $e_0$-spectral measure of $\widehat T_M$.

Let $\tau$ be a half-line Jacobi expression with coefficients $a_n,b_n$, and suppose that its minimal operator $T_{\min}$ is in the limit-circle case, with deficiency indices $(1,1)$. Let $p_n$ be the associated orthonormal polynomials and let $\mathfrak M$ be the set of all representing measures for the corresponding indeterminate Hamburger moment problem. Define
\[
M(x) := \sup_{\nu\in\mathfrak M}\nu(\{x\}) = \left(\sum_{n=0}^{\infty}|p_n(x)|^2\right)^{-1}.
\]
For each $x\in\R$, let $\nu^{[x]}$ denote the unique N-extremal representing measure whose support contains $x$, and let $T^{[x]}$ be the corresponding self-adjoint extension of $T_{\min}$. Then
\[
\nu^{[x]}(\{x\}) = M(x).
\]
These maximal-atom and equality statements follow from Simon's Theorem~5, Theorem~4.11 and Eq.~(4.25), together with Theorem~4.17; the correspondence with self-adjoint extensions follows from Simon's construction of the von Neumann solutions; see \cite[Theorem~4.10, Proposition~4.15, and Theorem~5]{Simon}.

\begin{theorem}[Self-adjoint extension selection by a moving maximal atom]
\label{thm:selection}
Let $T_M$ be finite nondegenerate Jacobi matrices and assume that, for every fixed $n$,
\[
a_n^{(M)} \to a_n>0, \qquad b_n^{(M)} \to b_n.
\]
Suppose that there are distinguished spectral points $x_M \in\R$ and $x_\infty\in\R$ such that
\begin{equation}\label{eq:moving-maximal-atom-hypothesis}
\boxed{ x_M \longrightarrow x_\infty, \qquad \nu_M(\{x_M\}) \longrightarrow M(x_\infty). }
\end{equation}
Then
\[
\widehat T_M \xrightarrow{\mathrm{s.r.}} T^{[x_\infty]}.
\]
\end{theorem}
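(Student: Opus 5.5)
The plan is to reduce strong-resolvent convergence to weak convergence of the $e_0$-spectral measures, and then use the maximal-atom equality case to identify the limit.

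\emph{Step 1 (compactness and moments).} Coefficientwise convergence gives, for each fixed $k$, eventual equality of the $k$-th moment of $\nu_M$ with a polynomial in finitely many coefficients. Indeed $\int x^k\,d\nu_M=\langle e_0,T_M^ke_0\rangle$ involves only $a_n^{(M)},b_n^{(M)}$ with $n\le k$. Hence these moments converge to the moments $m_k$ of the limiting moment problem. In particular the second moments are bounded, so $(\nu_M)$ is tight. Every subsequence therefore has a further subsequence converging weakly to some probability measure $\nu$. Uniform bounds on the $2k$-th moments give uniform integrability of $x^j$ for $j<2k$, so $\nu$ has moments $m_k$, i.e.\ $\nu\in\mathfrak M$.

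\emph{Step 2 (the atom survives).} We have $x_M\to x_\infty$ and $\nu_M(\{x_M\})\to M(x_\infty)$. By upper semicontinuity of mass on closed sets (Portmanteau applied to $[x_\infty-\varepsilon,x_\infty+\varepsilon]$, then letting $\varepsilon\downarrow0$),
\[
\nu(\{x_\infty\})\ge\limsup_M \nu_M(\{x_M\})=M(x_\infty).
\]
Since $M(x_\infty)$ is the supremum over $\mathfrak M$, equality holds. By Simon's Theorem~5 (strict maximality), $\nu=\nu^{[x_\infty]}$. The limit is independent of the subsequence, so $\nu_M\to\nu^{[x_\infty]}$ weakly along the full sequence.

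\emph{Step 3 (from measures to resolvents).} This is the main obstacle: weak convergence of the $e_0$-measure alone must control all matrix elements of the resolvent. I would use that $\nu^{[x_\infty]}$ is N-extremal, so polynomials are dense in $L^2(\nu^{[x_\infty]})$ and $e_0$ is cyclic for $T^{[x_\infty]}$ with unitary map $e_n\mapsto p_n$ (Simon, Theorem~4.10, Proposition~4.15). Similarly $e_0$ is cyclic for $T_M$, with $e_n\mapsto p_n^{(M)}$. Then, for fixed $m,n$ and nonreal $z$,
\[
\langle e_m,(\widehat T_M-z)^{-1}e_n\rangle=\int\frac{p_m^{(M)}(x)p_n^{(M)}(x)}{x-z}\,d\nu_M(x),
\]
once $M>m,n$. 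The polynomials $p_m^{(M)}$ converge coefficientwise to $p_m$. The integrand is a polynomial of fixed degree divided by $x-z$, which is continuous but unbounded. I would control the tails with the uniformly bounded moments, which give uniform integrability of $|p_mp_n|/|x-z|\le C(1+x^{m+n})$ once we bound the moments of order up to $2(m+n)$. This yields convergence to $\int p_mp_n(x-z)^{-1}d\nu^{[x_\infty]}=\langle e_m,(T^{[x_\infty]}-z)^{-1}e_n\rangle$, where the last equality is exactly where N-extremality (the unitary identification) is used. The resolvents are uniformly bounded by $|\Im z|^{-1}$, so matrix-element convergence gives weak resolvent convergence on $\cH$.

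\emph{Step 4 (weak to strong).} For self-adjoint operators, weak resolvent convergence implies strong resolvent convergence. This follows from the resolvent identity: $\|(\widehat T_M-z)^{-1}f\|^2=\langle f,(\widehat T_M-\bar z)^{-1}(\widehat T_M-z)^{-1}f\rangle=(z-\bar z)^{-1}\langle f,[(\widehat T_M-z)^{-1}-(\widehat T_M-\bar z)^{-1}]f\rangle$. This converges to the corresponding quantity for the limit, so norm convergence plus weak convergence gives strong convergence. This concludes $\widehat T_M\xrightarrow{\mathrm{s.r.}}T^{[x_\infty]}$.
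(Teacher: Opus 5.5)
Your proposal is correct and follows essentially the same route as the paper's proof. Both arguments use moment convergence and tightness, then apply Portmanteau on shrinking closed intervals around $x_\infty$ together with Simon's equality case, so that every subsequential limit is identified as $\nu^{[x_\infty]}$. Both then write resolvent matrix elements as integrals of $p_m^{(M)}p_n^{(M)}/(x-z)$ against $\nu_M$, controlled by uniform moment bounds, and upgrade weak to strong resolvent convergence via $\|(\widehat T_M-z)^{-1}f\|^2=(\Im z)^{-1}\Im\langle f,(\widehat T_M-z)^{-1}f\rangle$, which is exactly your resolvent-identity computation.
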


\begin{proof}
For every fixed integer $k\ge0$, the spectral theorem gives
\[
\int_{\R}x^k\,d\nu_M(x) = \langle e_0,T_M^k e_0\rangle .
\]
The vector $T_M^k e_0$ involves only finitely many Jacobi coefficients. Convergence of those coefficients therefore gives convergence of every fixed moment, and in particular uniform boundedness of every fixed even moment. The second-moment bound gives tightness, while a higher even-moment bound gives the uniform integrability needed to pass any fixed moment through weak convergence.

Take a weakly convergent subsequence $\nu_{M_j} \Rightarrow \nu$. By the preceding moment convergence and uniform integrability, $\nu$ has exactly the moments determined by the limiting Jacobi coefficients. Hence $\nu$ is a representing measure for the limiting Hamburger moment problem. At this stage no N-extremality is asserted: an arbitrary representing measure need not be the spectral measure of a self-adjoint extension of $T_{\min}$.

Fix $\varepsilon>0$ and put
\[
F_\varepsilon = [x_\infty-\varepsilon,x_\infty+\varepsilon].
\]
For all sufficiently large $j$, $x_{M_j}\in F_\varepsilon$, so $\nu_{M_j}(F_\varepsilon) \ge \nu_{M_j}(\{x_{M_j}\})$. Since $F_\varepsilon$ is closed, the Portmanteau theorem and \eqref{eq:moving-maximal-atom-hypothesis} give
\[
\nu(F_\varepsilon) \ge \limsup_{j\to\infty}\nu_{M_j}(F_\varepsilon)
\ge \lim_{j\to\infty}\nu_{M_j}(\{x_{M_j}\}) = M(x_\infty).
\]
Letting $\varepsilon \downarrow 0$ and using continuity from above yields $\nu(\{x_\infty\})\ge M(x_\infty)$. By definition of the maximal atom mass, every representing measure satisfies $\nu(\{x_\infty\})\le M(x_\infty)$. Consequently
\[
\nu(\{x_\infty\}) = M(x_\infty).
\]
Simon's equality case for the maximal-atom bound says that equality occurs only for the unique N-extremal measure containing $x_\infty$; equivalently, Theorem~5 together with Theorems~4.11 and~4.17 gives $\nu = \nu^{[x_\infty]}$. Thus every weakly convergent subsequence has the same limit, and therefore
\[
\nu_M \Rightarrow \nu^{[x_\infty]}.
\]

It remains to pass from scalar spectral measures to resolvents. The vector $e_0$ is cyclic for the finite Jacobi block $T_M$ on $\cH_M$. If $p_j^{(M)}$ denotes the orthonormal polynomial determined by the finite Jacobi coefficients, then for every fixed $j$,
\[
e_j = p_j^{(M)}(T_M)e_0 = p_j^{(M)}(\widehat T_M)e_0.
\]
Thus, for fixed $j,k$ and $z\in\C \setminus \R$,
\[
\langle e_j,(\widehat T_M-z)^{-1}e_k\rangle = \int_{\R}
\frac{p_j^{(M)}(x)p_k^{(M)}(x)}{x-z}\,d\nu_M(x).
\]
Because the coefficients of the fixed-degree polynomials $p_j^{(M)}$ and $p_k^{(M)}$ converge, they are uniformly bounded coefficientwise for all sufficiently large $M$. Hence there is a constant $C_{j,k,z}$, independent of such $M$, for which
\[
\left| \frac{p_j^{(M)}(x)p_k^{(M)}(x)}{x-z} \right|
\le C_{j,k,z}(1 + |x|^{j+k}), \qquad x\in\R.
\]
Choose an integer $r$ such that $2r>j+k$. The already established moment bounds give, for some $C_r<\infty$, $\sup_M \int_{\R}x^{2r}\,d\nu_M(x)\le C_r$, and therefore
\[
\sup_M \int_{|x|>R}(1 + |x|^{j+k})\,d\nu_M(x) \le C_r \bigl(R^{-2r} + R^{j+k-2r}\bigr)
\xrightarrow[R \to \infty]{} 0.
\]
Thus the resolvent integrands are uniformly integrable. On every compact real interval they converge uniformly, since $p_j^{(M)} \to p_j$ and $p_k^{(M)} \to p_k$ coefficientwise and $z\notin\R$. Weak convergence of $\nu_M$ therefore gives, by linearity, for $f,g\in c_{00}$,
\[
\langle f,(\widehat T_M-z)^{-1}g\rangle \longrightarrow
\langle f,(T^{[x_\infty]}-z)^{-1}g\rangle.
\]
The uniform resolvent bound $\|(\widehat T_M-z)^{-1}\|\le|\Im z|^{-1}$ and density of $c_{00}$ extend this to weak resolvent convergence on all of $\cH$.

Finally take $z\in\C_+$. For every $f\in\cH$,
\[
\|(\widehat T_M-z)^{-1}f\|^2 = \frac{1}{\Im z}
\Im \langle f,(\widehat T_M-z)^{-1}f\rangle,
\]
and the same identity holds for $T^{[x_\infty]}$. Weak resolvent convergence therefore implies convergence of the corresponding resolvent-vector norms. Weak convergence together with convergence of norms is strong convergence, so
\[
(\widehat T_M-z)^{-1}f \longrightarrow (T^{[x_\infty]}-z)^{-1}f
\]
for every $f\in\cH$, which is the claimed strong resolvent convergence.
\end{proof}

\begin{remark}[The moving maximal-atom mechanism]
The decisive input is saturation of the maximal-atom bound, not merely the survival of a positive atom. A positive limiting atom may occur in a non-N-extremal representing measure and therefore need not select a self-adjoint extension. Convergence to the maximal mass
\[
M(x_\infty) = \left(\sum_{n=0}^{\infty}|p_n(x_\infty)|^2\right)^{-1}
\]
forces equality in the maximal-atom bound and hence selects the unique N-extremal measure whose support contains $x_\infty$.
\end{remark}

\begin{corollary}[Strong resolvent convergence for the $q$-Krawtchouk limit]
\label{cor:qk-strong-resolvent}
Assume $0<q<1$ and
\[
M \to \infty, \qquad N = N_M \to \infty, \qquad M-N_M \to \infty, \qquad
p_M q^{2N_M} \to \rho\in(0,\infty).
\]
Let $A_M$ be the reflected and centered $q$-Krawtchouk Jacobi operators defined in Section~\ref{sec:q-less-one-scaling}, let $\widehat A_M = A_M \oplus 0$ on $\cH$, and let $A_\rho$ be the self-adjoint realization identified in Section~\ref{sec:selected-extension}. Then
\[
\widehat A_M \xrightarrow{\mathrm{s.r.}} A_\rho.
\]
\end{corollary}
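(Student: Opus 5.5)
The plan is to apply Theorem~\ref{thm:selection} with $T_M=A_M$, the limiting Jacobi expression $\tau=A_{\rho,\mathrm{formal}}$, distinguished points $x_M=\eta_{N,M}=0$ and $x_\infty=0$, and then identify $T^{[0]}$ with $A_\rho$. The hypotheses are checked in order.

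\emph{Step 1 (the Jacobi data).} Each $A_M$ is a finite nondegenerate Jacobi matrix. For $0\le n<M$ we have $a_n^{(M)}=\sqrt{(q^{-n-1}-1)(1-q^{M-n})}>0$. By Proposition~\ref{prop:q-less-one-coefficient-limit}, $a_n^{(M)}\to a_n=\sqrt{q^{-n-1}-1}>0$ and $b_n^{(M)}\to c_\rho$ for every fixed $n$. The limiting minimal operator $A_{\rho,\min}=J_{\min}+c_\rho I$ has deficiency indices $(1,1)$, as recorded in Section~\ref{sec:selected-extension} via Ismail--Masson and Simon. So the indeterminate setting of Section~\ref{sec:selection-principle} applies. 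Its orthonormal polynomials are the $p_n^{(\rho)}$, and its maximal-atom function is $M_\rho$.

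\emph{Step 2 (the moving atom).} The degree-$N$ eigenvalue of $A_M$ is $\eta_{N,M}=0$ exactly, so $x_M\equiv0\to x_\infty=0$. The $e_0$-spectral measure of $A_M$ is $\nu_M=\sum_{l\in\mathcal I_M}\pi_M(l)\,\delta_{\eta_{N-l,M}}$. Because the eigenvalues are simple, $\nu_M(\{0\})=\pi_M(0)$. Proposition~\ref{prop:finite-dual-weight-limit} at $l=0$ gives $\pi_M(0)\to\mu_0$, and Proposition~\ref{prop:candidate-extension} gives $\mu_0=M_\rho(0)$. This is hypothesis \eqref{eq:moving-maximal-atom-hypothesis}.

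\emph{Step 3 (identifying the extension).} Theorem~\ref{thm:selection} yields $\widehat A_M\xrightarrow{\mathrm{s.r.}}T^{[0]}$, where $T^{[0]}$ is the self-adjoint extension of $A_{\rho,\min}$ whose N-extremal $e_0$-spectral measure contains $0$. We must show $T^{[0]}=A_\rho$, and there are two routes.

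\emph{First route (spectra).} By \eqref{eq:selected-zero-mode}, $A_\rho$ is a self-adjoint extension of $A_{\rho,\min}$ having $0$ as an eigenvalue. By Simon's Theorem~4.11 (disjoint spectra of distinct extensions), it is the unique such extension. Since $T^{[0]}$ also has $0$ in its spectrum, $T^{[0]}=A_\rho$.

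\emph{Second route (measures).} The $e_0$-spectral measure $\nu_\rho$ of $A_\rho$ is N-extremal and carries an atom at $0$. By uniqueness in Simon's Theorem~5, $\nu_\rho=\nu^{[0]}$. The spectral measure at the cyclic vector $e_0$ determines the extension, so again $T^{[0]}=A_\rho$.

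The main obstacle is this final identification. It relies on the correspondence between N-extremal measures and self-adjoint extensions. In particular, one must check that $e_0$ is cyclic for $A_\rho$, which holds because $\cU_\rho$ is unitary, so $\{u_n\}$ spans $\ellZ$. I would present the first route, since it uses only the eigenvector \eqref{eq:selected-zero-mode} and the disjoint-spectra statement. As a remark, the conclusion also follows independently from Theorem~\ref{thm:projection-convergence}-type arguments, but the corollary is meant to illustrate the single-atom mechanism.
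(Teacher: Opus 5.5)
Your proposal is correct and follows the paper's proof essentially verbatim: you apply Theorem~\ref{thm:selection} with $x_M=x_\infty=0$, use Proposition~\ref{prop:q-less-one-coefficient-limit} for the coefficient convergence, and use Propositions~\ref{prop:finite-dual-weight-limit} and~\ref{prop:candidate-extension} to get $\nu_M(\{0\})=\pi_M(0)\to\mu_0=M_\rho(0)$. The paper identifies the selected extension with $A_\rho$ simply by citing Section~\ref{sec:selected-extension}, which is exactly your first route: $A_\rho$ is the unique self-adjoint extension with $0$ in its spectrum, by the disjoint-spectra statement in Simon's Theorem~4.11.
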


\begin{proof}
Proposition~\ref{prop:q-less-one-coefficient-limit} gives the coefficient convergence required by Theorem~\ref{thm:selection}. Take $x_M=x_\infty=0$. Since the degree-$N$ eigenvalue of $A_M$ is zero, $\nu_M(\{0\}) = |\psi_{N,M}(0)|^2 = \pi_M(0)$. By Proposition~\ref{prop:finite-dual-weight-limit} and Proposition~\ref{prop:candidate-extension},
\[
\nu_M(\{0\}) = \pi_M(0) \longrightarrow \mu_0 = M_\rho(0).
\]
Thus the moving-maximal-atom hypothesis of Theorem~\ref{thm:selection} is satisfied. The unique N-extremal extension containing zero is precisely $A_\rho$ by Section~\ref{sec:selected-extension}. Hence $\widehat A_M \xrightarrow{\mathrm{s.r.}} A_\rho$.
\end{proof}

\section{Dual spectral structure and explicit formulas}\label{sec:explicit-kernel}

The preceding sections give two complementary descriptions of the limiting correlation projection,
\[
K_\rho = \ind_{(0,\infty)}(A_\rho) = \cU_\rho^*\,\ind_{\{l>0\}}\,\cU_\rho.
\]
The first is the spectral-projection realization of Section~\ref{sec:selected-extension}; the second is the direct half-lattice representation obtained in Section~\ref{sec:kernel-convergence}.  We now develop the dual operator behind the second representation and use it to obtain explicit forms of the kernel.

\subsection{The dual operator and the Borodin--Corwin picture}\label{subsec:dual-bc}

This is the fixed-$q$, discrete-spectral analogue of the bispectral picture underlying the Borodin--Olshanski construction \cite{BO}.  If $\mathcal F_{\mathrm{cl}}$ denotes the orthogonal-polynomial transform for a classical weight and $J_{\mathrm{cl}}$ is the Jacobi matrix representing multiplication by the continuous variable $t$, then, for the positive half-line ensemble, their construction may be written as
\[
\ind_{(0,\infty)}(J_{\mathrm{cl}}-r) =
\mathcal F_{\mathrm{cl}}^*\,\ind_{\{t>r\}}\,\mathcal F_{\mathrm{cl}}.
\]
Thus the same projection is viewed spectrally in the polynomial-index representation and spatially in the continuous spectral representation.  As in the Borodin--Olshanski picture, these two descriptions are complementary: the first identifies the correlation operator as a positive spectral projection, while the second makes the continuous-variable differential and shift relations available.  In the Hermite, Laguerre, and Jacobi cases, Borodin--Olshanski~\cite[\S\S3.2--3.4]{BO} use forward and backward shift identities together with integration by parts to reduce the half-line integrals to boundary terms and obtain the integrable form of the kernel.  In the present fixed-$q$ limit the spectral variable remains discrete, so the half-line is replaced by the half-lattice $l>0$.  Correspondingly, the continuous-variable relations are replaced here by a second-order difference operator in $l$, whose discrete Green identity produces the analogous boundary reduction.

We now return to the finite dual $q$-Krawtchouk transform of Section~\ref{subsec:finite-dual-transform}.  Let $\mathcal D_M$ denote the dual $q$-Krawtchouk difference operator in the variable $k$, with parameter $c_M=-p_M q^M$.  Explicitly,
\[
(\mathcal D_M y)(k) = B_M(k)y(k+1) - [B_M(k) + D_M(k)]y(k) + D_M(k)y(k-1),
\]
where
\begin{align*}
B_M(k) & = \frac{(1-q^{k-M})(1+p_M q^k)}
{(1+p_M q^{2k})(1+p_M q^{2k+1})},\\
D_M(k) & = -\frac{p_M q^{2k-M-1}(1-q^k)(1+p_M q^{M+k})}
{(1+p_M q^{2k-1})(1+p_M q^{2k})}.
\end{align*}
Using \eqref{eq:qk-duality-kls}, the eigenvalue relation \cite[Eq.~(14.17.5)]{KLS} takes the form
\[
\mathcal D_M K_k(q^{-x};p_M,M;q) = q^{-x}(1-q^x)K_k(q^{-x};p_M,M;q), \qquad x = 0,\ldots,M.
\]
For the reflected spatial site $n$, we have $x=M-n$ in \eqref{eq:qk-duality-kls}, and hence $q^M\left[1 + q^{-(M-n)}(1-q^{M-n})\right] = q^n$. Thus $q^M(I+\mathcal D_M)$ has the exact eigenvalues $q^n$, $n=0,\ldots,M$.

After the reindexing $k=N-l$, symmetrization by the dual orthogonality weight, and the alternating gauge in \eqref{eq:finite-dual-vectors}, this operator is
\begin{equation}\label{eq:finite-dual-operator-spectral}
\sL_M := \cU_M\,\diag (1,q,\ldots,q^M)\,\cU_M^*.
\end{equation}
For interior $l\in\mathcal I_M$, set
\[
r^-_{l,M} := -q^M B_M(N-l), \qquad r^+_{l,M} := -q^M D_M(N-l).
\]
Then $r^\pm_{l,M}>0$, and $\sL_M$ is the positive-off-diagonal Jacobi matrix with
\begin{equation}\label{eq:finite-dual-jacobi-coeffs}
a^{\dual,M}_l = \sqrt{r^+_{l,M}r^-_{l+1,M}}, \qquad
b^{\dual,M}_l = q^M + r^-_{l,M} + r^+_{l,M}.
\end{equation}
Using $\rho_M = p_M q^{2N}$, we obtain
\begin{align*}
r^-_{l,M} & = \frac{(q^{N-l}-q^M)(1+\rho_M q^{-N-l})}
{(1+\rho_M q^{-2l})(1+\rho_M q^{-2l+1})},\\
r^+_{l,M} & = \frac{\rho_M q^{-2l-1}(1-q^{N-l})(1+\rho_M q^{M-N-l})}
{(1+\rho_M q^{-2l-1})(1+\rho_M q^{-2l})}.
\end{align*}
Consequently, for every fixed $l\in\Z$,
\begin{align*}
r^-_{l,M}& \longrightarrow
\frac{\rho q^{2l-1}}
{(\rho+q^{2l})(\rho+q^{2l-1})},\\
r^+_{l,M}& \longrightarrow
\frac{\rho q^{2l}}
{(\rho+q^{2l})(\rho+q^{2l+1})}.
\end{align*}
It follows that the Jacobi coefficients in \eqref{eq:finite-dual-jacobi-coeffs} converge to
\begin{align}
a_l^{\dual} & = \frac{\rho q^{2l+1/2}}
{(\rho+q^{2l+1})
\sqrt{(\rho+q^{2l})(\rho+q^{2l+2})}},\label{eq:bc-dual-a}\\
b_l^{\dual} & = \frac{\rho(1+q)q^{2l-1}}
{(\rho+q^{2l-1})(\rho+q^{2l+1})}.
\label{eq:bc-dual-b}
\end{align}
Let $\sL_\rho^{\dual}$ be the bilateral Jacobi operator on $\ellZ$ with these coefficients.  They are bounded, so $\sL_\rho^{\dual}$ is a bounded symmetric, hence self-adjoint, operator.

\begin{proposition}[Finite dual operator limit]\label{prop:finite-dual-operator-limit}
For each fixed $l\in\Z$, the coefficients of the finite positive-gauge dual $q$-Krawtchouk operator $\sL_M$ converge to the coefficients of $\sL_\rho^{\dual}$.  Thus $\sL_\rho^{\dual}$ is the bilateral fixed-$q$ edge limit of the finite dual $q$-Krawtchouk operator.
\end{proposition}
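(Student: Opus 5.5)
The statement is a coefficientwise limit, so I will verify it directly from the explicit formulas for $r^\pm_{l,M}$ and \eqref{eq:finite-dual-jacobi-coeffs}.

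The first step is to confirm the expressions for $r^\pm_{l,M}$ in terms of $\rho_M$. I substitute $k=N-l$ and $p_M=\rho_M q^{-2N}$ into $B_M$ and $D_M$. In $B_M(N-l)$ we have $p_Mq^{2k}=\rho_Mq^{-2l}$ and $p_Mq^{k}=\rho_Mq^{-N-l}$. Multiplying $-(1-q^{k-M})$ by $q^M$ gives $q^{N-l}-q^M$.

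For $D_M(N-l)$, the prefactor is $p_Mq^{2k-M-1}=\rho_Mq^{-2l-M-1}$. Multiplying by $q^M$ removes the $q^{-M}$, and $p_Mq^{M+k}=\rho_Mq^{M-N-l}$. The signs are fixed by $0<q<1$: $1-q^{k-M}<0$, $1-q^k>0$, and all denominators are positive. This gives $r^\pm_{l,M}>0$ and matches the displayed formulas.

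The second step is the limit for fixed $l$. Since $N\to\infty$ and $M-N\to\infty$, we have $q^{N-l}\to0$ and $q^M\to0$. I would not take these limits term by term, because $(q^{N-l}-q^M)(1+\rho_Mq^{-N-l})$ is a product of a vanishing and a diverging factor. Expanding it gives
\[
q^{N-l}+\rho_Mq^{-2l}-q^M-\rho_Mq^{M-N-l}\longrightarrow \rho q^{-2l}.
\]
Hence
\[
r^-_{l,M}\to \frac{\rho q^{-2l}}{(1+\rho q^{-2l})(1+\rho q^{-2l+1})}=\frac{\rho q^{2l-1}}{(\rho+q^{2l})(\rho+q^{2l-1})}.
\]
For $r^+_{l,M}$ all factors converge individually: $1-q^{N-l}\to1$ and $1+\rho_Mq^{M-N-l}\to1$. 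This yields $\rho q^{-2l-1}/\bigl((1+\rho q^{-2l-1})(1+\rho q^{-2l})\bigr)$, which equals the stated limit after clearing powers of $q$.

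The third step assembles the Jacobi coefficients. For $a^{\dual,M}_l=\sqrt{r^+_{l,M}r^-_{l+1,M}}$, the limit of the product under the root is
\[
\frac{\rho q^{2l}}{(\rho+q^{2l})(\rho+q^{2l+1})}\cdot\frac{\rho q^{2l+1}}{(\rho+q^{2l+2})(\rho+q^{2l+1})}.
\]
Its square root is \eqref{eq:bc-dual-a}. For the diagonal, $b^{\dual,M}_l=q^M+r^-_{l,M}+r^+_{l,M}$, and $q^M\to0$. Combining the two limits of $r^\pm$ over the common denominator $(\rho+q^{2l-1})(\rho+q^{2l})(\rho+q^{2l+1})$ gives the numerator
\[
\rho q^{2l-1}(\rho+q^{2l+1})+\rho q^{2l}(\rho+q^{2l-1})=\rho q^{2l-1}(1+q)(\rho+q^{2l}).
\]
Cancelling the factor $(\rho+q^{2l})$ gives \eqref{eq:bc-dual-b}.

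Fixed $l$ lies in the interior of $\mathcal I_M$ for large $M$, because $N\to\infty$ and $M-N\to\infty$, so the interior formulas apply. The only delicate point is the $0\cdot\infty$ product in $r^-$, which is handled by the expansion in the second step.
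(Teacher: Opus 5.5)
Your proof is correct and follows the paper's argument: substitute $k=N-l$ and $p_M=\rho_M q^{-2N}$ into $B_M$ and $D_M$, take the fixed-$l$ limits of $r^\pm_{l,M}$, and assemble $a^{\dual,M}_l$ and $b^{\dual,M}_l$ from \eqref{eq:finite-dual-jacobi-coeffs}. Your explicit expansion of the indeterminate product $(q^{N-l}-q^M)(1+\rho_M q^{-N-l})$ fills in a step the paper states without comment.
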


The strong convergence \eqref{eq:finite-dual-vector-strong-limit} also identifies the limiting eigenfunctions of this operator.  For fixed $l$, the finite eigenvalue equation $\sL_M v_{n,M} = q^n v_{n,M}$ involves only the coefficients and vector entries at $l-1,l,l+1$.  Passing to the limit using Proposition~\ref{prop:finite-dual-operator-limit} therefore gives
\begin{equation}\label{eq:dual-limit-eigenrelation}
\sL_\rho^{\dual}u_n = q^n u_n, \qquad n\ge0.
\end{equation}
Since $\{u_n\}_{n\ge0}$ is an orthonormal basis and \eqref{eq:bc-transform} defines $\cU_\rho$, it follows that
\[
\sL_\rho^{\dual} = \cU_\rho\,\diag (1,q,q^2,\ldots)\,\cU_\rho^*.
\]

The positivity convention fixed in Section~\ref{subsec:finite-dual-transform} also gives the finite dual operator a Markov interpretation.  Its ground-state transform
\[
P_M(l,l') := \frac{v_{0,M}(l')}{v_{0,M}(l)}\,\sL_M(l,l')
\]
is a reversible birth--death Markov chain on $\mathcal I_M$, with stationary distribution $\pi_M(l) = v_{0,M}(l)^2$.  Proposition~\ref{prop:finite-dual-weight-limit} gives the pointwise convergence $\pi_M(l) \longrightarrow \mu_l$ for every fixed $l\in\Z$.

A corresponding limiting Markov picture appears in Borodin--Corwin \cite{BC}.  In particular, we now identify the limiting dual operator just obtained with their spatial operator.  After identifying their dynamic-ASEP parameter $\alpha_{\mathrm{BC}}$ with our $\rho$, their one-step spatial Markov operator is
\begin{equation}\label{eq:bc-one-step}
(\mathsf Kf)(s) = \frac{q^s}{\rho+q^s}f(s+1) + \frac{\rho}{\rho+q^s}f(s-1), \qquad s\in\Z;
\end{equation}
see \cite[Definition~2.5, Eq.~(2.4)]{BC}.  Their spectral coordinate is
\[
\frac12\mathbf f(s) = \frac12\left(\rho^{1/2}q^{-s/2} - \rho^{-1/2}q^{s/2}\right),
\]
see~\cite[Eq.~(2.8)]{BC}.  Thus on the even lattice $s=2l$ it is exactly $\frac12s_l$.  Moreover, their even-parity stationary marginal is
\begin{equation}\label{eq:bc-stationary-mass}
\mathbb P(s_0 = 2l) = \frac{\rho^{-2l}q^{l(2l-1)}(1+\rho^{-1}q^{2l})}
{(-\rho^{-1},-q\rho,q;q)_\infty} = \mu_l;
\end{equation}
see \cite[Definition~2.12, Lemma~2.14, and Remark~2.17]{BC}.  Thus the stationary marginal in \cite{BC} is the same N-extremal continuous $q^{-1}$-Hermite measure selected in Section~\ref{sec:selected-extension}.

The operator $\mathsf K$ changes parity.  Let $\sP$ be the two-step chain induced by $\mathsf K^2$ on the even lattice, written in the coordinate $l=s/2$.  Its transition probabilities are
\begin{align}
\sP(l,l+1) & = \frac{q^{4l+1}}{(\rho+q^{2l})(\rho+q^{2l+1})},\label{eq:bc-P-plus}\\
\sP(l,l-1) & = \frac{\rho^2}{(\rho+q^{2l})(\rho+q^{2l-1})},\label{eq:bc-P-minus}\\
\sP(l,l) & = \frac{\rho(1+q)q^{2l-1}}
{(\rho+q^{2l-1})(\rho+q^{2l+1})}.\label{eq:bc-P-diag}
\end{align}
Borodin--Corwin~\cite[Lemma~2.9]{BC} gives the one-step eigenvalue $q^{n/2}$; restricting the square of their operator to the even parity class, \cite[Eq.~(4.8)]{BC} gives
\begin{equation}\label{eq:bc-P-eigenrelation}
\sP\,h_n\!\left(\frac{s_l}{2}\mid q\right) = q^n h_n\!\left(\frac{s_l}{2}\mid q\right).
\end{equation}
The detailed-balance relation for the measure $\mu = \{\mu_l\}_{l\in\Z}$ can be checked directly.  Indeed, from \eqref{eq:bc-stationary-mass}, \eqref{eq:bc-P-plus}, and \eqref{eq:bc-P-minus},
\[
\frac{\mu_{l+1}}{\mu_l} = \rho^{-2}q^{4l+1}\frac{\rho+q^{2l+2}}{\rho+q^{2l}} =
\frac{\sP(l,l+1)}{\sP(l+1,l)}.
\]
Thus $\mu_l \sP(l,l+1) = \mu_{l+1}\sP(l+1,l)$. Hence $\mu$ is reversible for $\sP$.  In particular, $\sP$ is symmetric on $\ell^2(\Z,\mu)$.  Reversibility also implies that $\mu$ is stationary, and, since $\sP$ is a Markov operator, Jensen's inequality gives $\|\sP\|\le1$.  Therefore $\sP$ is a bounded everywhere-defined symmetric operator, and hence is self-adjoint.

With
\[
U_\mu:\ell^2(\Z,\mu) \longrightarrow \ellZ, \qquad (U_\mu f)(l) = \sqrt{\mu_l}\,f(l),
\]
put
\begin{equation}\label{eq:bc-symmetrized-operator}
\sL_{\mathrm{BC}} := U_\mu\sP U_\mu^{-1}.
\end{equation}
This is self-adjoint on $\ellZ$.  Its diagonal coefficient is \eqref{eq:bc-P-diag}, and detailed balance shows that its positive off-diagonal coefficient between $l$ and $l+1$ is
\[
\sqrt{\sP(l,l+1)\sP(l+1,l)} = \frac{\rho q^{2l+1/2}}
{(\rho+q^{2l+1})\sqrt{(\rho+q^{2l})(\rho+q^{2l+2})}}.
\]
Thus its Jacobi coefficients are exactly \eqref{eq:bc-dual-a}--\eqref{eq:bc-dual-b}, and therefore
\begin{equation}\label{eq:bc-dual-identification}
\sL_{\mathrm{BC}} = \sL_\rho^{\dual}.
\end{equation}
The eigenrelation \eqref{eq:bc-P-eigenrelation} is consequently the Borodin--Corwin realization of \eqref{eq:dual-limit-eigenrelation}, and the transform \eqref{eq:bc-transform} is the normalized unitary transform built from the same continuous $q^{-1}$-Hermite eigenfunctions and stationary weights.  In this sense, the dual operator intrinsic to the fixed-$q$ $q$-Krawtchouk edge limit is exactly the Borodin--Corwin two-step operator.

\subsection{Christoffel--Darboux / integrable form}
The limiting dual operator from the preceding subsection gives an efficient summation of \eqref{eq:spectral-series-kernel} away from the diagonal.  For brevity write $\sL := \sL_\rho^{\dual} = \sL_{\mathrm{BC}}$, so that
\[
(\sL f)_l = a_l^{\dual}f_{l+1} + b_l^{\dual}f_l + a_{l-1}^{\dual}f_{l-1},
\]
with $a_l^{\dual},b_l^{\dual}$ given by \eqref{eq:bc-dual-a}--\eqref{eq:bc-dual-b}.  For the eigenvectors $u_n$ from Section~\ref{sec:kernel-convergence}, $\sL u_n=q^n u_n$. For sequences $f,g$ define the Casoratian
\[
[f,g]_l := a_l^{\dual}\bigl(f(l+1)g(l) - f(l)g(l+1)\bigr).
\]
The discrete Green identity is
\[
\bigl((\sL f)_l g(l) - f(l)(\sL g)_l\bigr) = [f,g]_l - [f,g]_{l-1}.
\]
Applying this to $u_m,u_n$ and summing over $l=1,\ldots,M$ gives
\[
(q^m-q^n)\sum_{l=1}^M u_m(l)u_n(l) = [u_m,u_n]_M - [u_m,u_n]_0.
\]
For fixed $n$, the explicit mass formula and the degree of $P_n$ imply
\[
u_n(l) = O\!\left(\rho^{-l}q^{l^2-(n+1/2)l}\right), \qquad a_l^{\dual} = O(q^{2l}), \qquad
l \to +\infty,
\]
so the upper boundary term tends to zero super-exponentially.  Letting $M\to\infty$ and using \eqref{eq:spectral-series-kernel}, we obtain
\[
K_\rho(m,n) = \frac{a_0^{\dual}}{q^m-q^n} \bigl(
u_m(0)u_n(1) - u_m(1)u_n(0)\bigr), \qquad m\ne n.
\]
From the explicit formulas for $a_0^{\dual}$, $\mu_0$, and $\mu_1$,
\[
a_0^{\dual}\sqrt{\mu_0 \mu_1} = \frac{q}
{\rho(\rho+q)(-\rho^{-1},-q\rho,q;q)_\infty} =: \varkappa_\rho.
\]
With $s_l=s_l(\rho)$ as in \eqref{eq:mu-l-explicit}, set
\[
F_\rho(n) := \sqrt{\varkappa_\rho}\, \frac{q^{n(n+1)/4}}{\sqrt{(q;q)_n}}\, h_n(s_0(\rho)/2\mid q), \qquad
G_\rho(n) := \sqrt{\varkappa_\rho}\, \frac{q^{n(n+1)/4}}{\sqrt{(q;q)_n}}\,
h_n(s_1(\rho)/2\mid q).
\]

\begin{proposition}[Christoffel--Darboux form of the limiting kernel]\label{prop:cd}
For $m,n\in\Z_+$ with $m\ne n$,
\begin{equation}\label{eq:cd-kernel}
K_\rho(m,n) = \frac{F_\rho(m)G_\rho(n) - G_\rho(m)F_\rho(n)}{q^m-q^n}.
\end{equation}
Equivalently,
\[
K_\rho(m,n) = \varkappa_\rho \frac{q^{[m(m+1)+n(n+1)]/4}}{\sqrt{(q;q)_m(q;q)_n}} \frac{
h_m(s_0(\rho)/2\mid q)h_n(s_1(\rho)/2\mid q) - h_m(s_1(\rho)/2\mid q)h_n(s_0(\rho)/2\mid q)}
{q^m-q^n}.
\]
For $m=n$ we retain the spectral series \eqref{eq:spectral-series-kernel}.
\end{proposition}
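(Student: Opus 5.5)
The plan is to take the boundary-reduced formula derived just before the statement and rewrite it in the $F_\rho,G_\rho$ notation. The first step is to make the Green-identity summation rigorous. From \eqref{eq:dual-limit-eigenrelation}, $\sL u_n=q^nu_n$. The discrete Green identity applied to $f=u_m$, $g=u_n$ and summed over $l=1,\dots,L$ gives
\[
(q^m-q^n)\sum_{l=1}^{L}u_m(l)u_n(l)=[u_m,u_n]_L-[u_m,u_n]_0 .
\]
Both the identity and the sum are finite at each $L$, so no domain questions about $\sL$ arise; $\sL$ is bounded and only its coefficients enter.

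The second step is to let $L\to\infty$. On the left, the partial sums converge to $K_\rho(m,n)$ by \eqref{eq:spectral-series-kernel}; this series converges absolutely because $u_m,u_n\in\ellZ$. On the right, I need $[u_m,u_n]_L\to0$.

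\begin{itemize}
\item From \eqref{eq:mu-l-explicit}, $\mu_l=O(\rho^{-2l}q^{2l^2-l})$ as $l\to+\infty$.
\item $P_n(s_l)=O(q^{-nl})$, since $s_l\sim\rho^{1/2}q^{-l}$ and $P_n$ is a polynomial of degree $n$.
\item From \eqref{eq:bc-dual-a}, $a_l^{\dual}=O(q^{2l})$.
\end{itemize}

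Together these make $a_L^{\dual}u_m(L\pm1)u_n(L)$ decay like $q^{2L^2}$ up to exponential factors, so the upper boundary term vanishes. This is the only analytic point, and it is elementary. Dividing by $q^m-q^n\neq0$ (valid since $m\ne n$ and $0<q<1$) gives
\[
K_\rho(m,n)=\frac{a_0^{\dual}\bigl(u_m(0)u_n(1)-u_m(1)u_n(0)\bigr)}{q^m-q^n}.
\]

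The third step is to substitute $u_n(l)=\sqrt{\mu_l}\,P_n(s_l)$ with $P_n$ from \eqref{eq:Pn-cont-qinv-Hermite}. The numerator becomes
\[
a_0^{\dual}\sqrt{\mu_0\mu_1}\,\bigl(P_m(s_0)P_n(s_1)-P_m(s_1)P_n(s_0)\bigr).
\]
Since $\mu_0,\mu_1>0$, the prefactor is $\varkappa_\rho$. Writing $\varkappa_\rho=\sqrt{\varkappa_\rho}\sqrt{\varkappa_\rho}$ and recalling $F_\rho(n)=\sqrt{\varkappa_\rho}P_n(s_0)$ and $G_\rho(n)=\sqrt{\varkappa_\rho}P_n(s_1)$ yields \eqref{eq:cd-kernel}. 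Expanding $P_n$ gives the second displayed form.

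It remains to verify the closed form of $\varkappa_\rho$, which I expect to be the only real obstacle, a routine but error-prone computation.

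\begin{itemize}
\item Setting $l=0$ in \eqref{eq:mu-l-explicit}: $\mu_0=(1+\rho^{-1})/(\cdots)_\infty$.
\item Setting $l=1$: $\mu_1=\rho^{-2}(1+\rho^{-1}q^2)\,q/(\cdots)_\infty$.
\item Their product is $\mu_0\mu_1=\dfrac{(\rho+1)(\rho+q^2)\,q}{\rho^4}\cdot\dfrac{1}{(\cdots)_\infty^{2}}$.
\item From \eqref{eq:bc-dual-a}, $a_0^{\dual}=\dfrac{\rho q^{1/2}}{(\rho+q)\sqrt{(\rho+1)(\rho+q^2)}}$.
\end{itemize}

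The square roots cancel, leaving $q/\bigl(\rho(\rho+q)(-\rho^{-1},-q\rho,q;q)_\infty\bigr)$, which is the stated $\varkappa_\rho$. The diagonal case $m=n$ is excluded by hypothesis and is left to \eqref{eq:spectral-series-kernel}.
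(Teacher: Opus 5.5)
Your proposal is correct and follows essentially the same route as the paper: the discrete Green identity for $\sL$ applied to $u_m,u_n$ and summed over $1\le l\le L$, super-exponential vanishing of the upper Casoratian from $u_n(l)=O(\rho^{-l}q^{l^2-(n+1/2)l})$ and $a_l^{\dual}=O(q^{2l})$, and then the substitution $u_n(l)=\sqrt{\mu_l}\,P_n(s_l)$ together with $a_0^{\dual}\sqrt{\mu_0\mu_1}=\varkappa_\rho$. Your explicit verification of $\varkappa_\rho$ is also correct; the paper states that identity without writing out the computation.
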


\subsection{A partial-theta expansion}
For completeness, the same half-lattice spectral sum can also be written as a finite combination of partial theta functions.  For $0\le j\le n$, write
\[
\genfrac{[}{]}{0pt}{}{n}{j}_{q^{-1}} := \frac{(q^{-1};q^{-1})_n}
{(q^{-1};q^{-1})_j\,(q^{-1};q^{-1})_{n-j}}
\]
for the $q^{-1}$-binomial coefficient.  By the classical Laurent expansion for continuous $q$-Hermite polynomials, specialized to base $q^{-1}$ (see Koekoek--Lesky--Swarttouw~\cite[\S14.26, Remark after (14.26.14)]{KLS}), we have
\[
h_n\!\left(\frac{s_l}{2}\mid q\right) = \sum_{j=0}^n (-1)^j
\genfrac{[}{]}{0pt}{}{n}{j}_{q^{-1}} \rho^{-j+n/2}q^{l(2j-n)}.
\]
Define
\[
\Theta_q(z) := \sum_{l=1}^\infty q^{2l^2}z^l, \qquad
D_r^{m,n} := (-1)^r \sum_{\substack{j+k=r\\0\le j\le m,\ 0\le k\le n}}
\genfrac{[}{]}{0pt}{}{m}{j}_{q^{-1}} \genfrac{[}{]}{0pt}{}{n}{k}_{q^{-1}}.
\]
Equivalently, $D_r^{m,n} = [t^r]H_m(-t;q^{-1})H_n(-t;q^{-1})$, where
\[
H_n(t;q^{-1}) := \sum_{j=0}^n \genfrac{[}{]}{0pt}{}{n}{j}_{q^{-1}}t^j
\]
is the Rogers--Szeg\H{o} polynomial.

\begin{proposition}[Partial-theta form]\label{prop:partial-theta}
For all $m,n\in\Z_+$,
\begin{align}
K_\rho(m,n) & = \frac{q^{[m(m+1)+n(n+1)]/4}}
{\sqrt{(q;q)_m(q;q)_n}\,(-\rho^{-1},-q\rho,q;q)_\infty}\nonumber\\
&\quad \times \sum_{r=0}^{m+n}D_r^{m,n}\rho^{-r+(m+n)/2}
\left[
\Theta_q\!\left(\rho^{-2}q^{2r-m-n-1}\right) +
\rho^{-1}\Theta_q\!\left(\rho^{-2}q^{2r-m-n+1}\right)
\right].
\label{eq:partial-theta-kernel}
\end{align}
\end{proposition}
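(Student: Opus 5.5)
Starting from the spectral series \eqref{eq:spectral-series-kernel}, insert the explicit forms of $\mu_l$ from \eqref{eq:mu-l-explicit} and of $P_m,P_n$ from \eqref{eq:Pn-cont-qinv-Hermite}. The prefactor $q^{[m(m+1)+n(n+1)]/4}/\sqrt{(q;q)_m(q;q)_n}$ and the factor $(-\rho^{-1},-q\rho,q;q)_\infty^{-1}$ do not depend on $l$, so they come out of the sum. What remains is
\[
\sum_{l=1}^\infty \rho^{-2l}(1+\rho^{-1}q^{2l})q^{l(2l-1)}\,h_m\!\left(\tfrac{s_l}{2}\mid q\right)h_n\!\left(\tfrac{s_l}{2}\mid q\right).
\]

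Next, substitute the Laurent expansion of $h_n(s_l/2\mid q)$ quoted just before the proposition, once for $m$ (index $j$) and once for $n$ (index $k$). The product is a finite double sum. With $r=j+k$, its general term is
\[
(-1)^{r}\genfrac{[}{]}{0pt}{}{m}{j}_{q^{-1}}\genfrac{[}{]}{0pt}{}{n}{k}_{q^{-1}}\rho^{-r+(m+n)/2}q^{l(2r-m-n)},
\]
since $(-1)^j(-1)^k=(-1)^r$, the powers of $\rho$ combine to $\rho^{-r+(m+n)/2}$, and the powers of $q$ to $q^{l(2r-m-n)}$. Grouping by $r$ gives exactly
\[
h_m h_n=\sum_{r=0}^{m+n}D_r^{m,n}\rho^{-r+(m+n)/2}q^{l(2r-m-n)}.
\]
The identification $D_r^{m,n}=[t^r]H_m(-t;q^{-1})H_n(-t;q^{-1})$ is just the Cauchy product, and is only needed for the ``equivalently'' remark.

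Because the $r$-sum is finite, it can be exchanged with the $l$-sum provided each $l$-series converges absolutely. Split the weight into its two terms. The term $1$ contributes
\[
\sum_{l\ge1}q^{2l^2}\bigl(\rho^{-2}q^{-1+2r-m-n}\bigr)^l=\Theta_q(\rho^{-2}q^{2r-m-n-1}).
\]
The term $\rho^{-1}q^{2l}$ contributes
\[
\rho^{-1}\sum_{l\ge1}q^{2l^2}\bigl(\rho^{-2}q^{2r-m-n+1}\bigr)^l=\rho^{-1}\Theta_q(\rho^{-2}q^{2r-m-n+1}).
\]
Absolute convergence holds because $\Theta_q$ is entire: the Gaussian factor $q^{2l^2}$ beats any geometric growth. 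This justifies the interchange. Collecting terms yields \eqref{eq:partial-theta-kernel} for all $m,n$, including $m=n$.

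The main obstacle is bookkeeping rather than analysis. One must check the exponent $q^{l(2l-1)}=q^{2l^2}q^{-l}$ carefully, and confirm that the quoted Laurent expansion is correctly normalized for base $q^{-1}$ at the point $s_l/2$, with $e^{i\theta}$ replaced by $\rho^{1/2}q^{-l}$. If there is any doubt about that expansion, I would verify it first for $n=0,1$ against the recurrence $2x h_n = h_{n+1}+q^{-n}(1-q^n)h_{n-1}$ with $2x=s_l$, and then rely on its citation from \cite{KLS}.
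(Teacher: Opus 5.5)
Your proposal is correct and follows the paper's proof essentially step for step: you substitute $\mu_l$, $P_n$ and the base-$q^{-1}$ Laurent expansion into the spectral series, split the factor $1+\rho^{-1}q^{2l}$ to produce the two partial theta functions, and group the finite double sum by $r=j+k$ to obtain $D_r^{m,n}$. Your justification of the sum interchange (the $r$-sum is finite and $\Theta_q$ is entire) is a detail the paper leaves implicit.
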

\begin{proof}
Substitute \eqref{eq:Pn-cont-qinv-Hermite}, \eqref{eq:mu-l-explicit}, and the Laurent expansion above into \eqref{eq:spectral-series-kernel}.  For fixed $j,k$, with $r=j+k$, the $l$-dependent factor is
\[
q^{2l^2}(\rho^{-2}q^{2r-m-n-1})^l + \rho^{-1}q^{2l^2}(\rho^{-2}q^{2r-m-n+1})^l.
\]
Summing over $l\ge1$ gives the two partial theta functions.  The remaining finite double sum can then be grouped according to $r=j+k$, and the corresponding inner coefficient sum is precisely $D_r^{m,n}$.
\end{proof}

\begin{remark}
The half-lattice restriction $l\ge1$ breaks the bilateral theta cancellations of the full N-extremal orthogonality measure.  Proposition~\ref{prop:partial-theta} is therefore a useful alternative expansion, while Proposition~\ref{prop:cd} gives the more compact off-diagonal formula.
\end{remark}

\section{Comparison with the \texorpdfstring{$q>1$}{q>1} regime}\label{sec:qgreater1-main}

We now turn to the opposite fixed-$q$ regime.  Its role is complementary to the limit-circle phenomenon above.  For $0<q<1$, the coefficientwise Jacobi limit does not by itself specify a self-adjoint operator, and the finite ensemble supplies the missing boundary datum.  For $q>1$, the limiting moment problem is determinate, so after a fixed alternating Jacobi gauge the usual spectral-projection mechanism of Borodin--Olshanski~\cite{BO} applies.

Throughout this section let
\[
q>1, \qquad N = N_M \to \infty, \qquad M-N_M \to \infty, \qquad
\rho_M := p_M q^{2N_M} \longrightarrow \rho\in(0,\infty),
\]
and put
\[
Q := q^{-1}\in(0,1), \qquad d_M := M-2N, \qquad r_M := p_M q^M = \rho_M q^{d_M}.
\]

There is one sign change relative to Section~\ref{sec:finite-spectral} that must be fixed before taking the limit.  Write the two off-diagonal coefficients in the unsymmetrized difference equation as
\[
A_x := 1-q^{x-M}, \qquad C_x := -p_M(1-q^x).
\]
For $q>1$ both are positive in the interior, and the weight relation is $w_x A_x = w_{x+1}C_{x+1}$. Thus conjugation by the positive square root of the weight gives the positive off-diagonal $+\sqrt{A_x C_{x+1}}$, and reflection $n=M-x$ does not change its sign.  Accordingly, if $H_{p_M,M}$ denotes the actual ungauged reflected symmetrization for $q>1$, then
\begin{equation}\label{eq:qgreater1-ungauged-offdiag}
(H_{p_M,M})_{n,n+1} = +\sqrt{p_M(q^{-n-1}-1)(1-q^{M-n})}, \qquad 0\le n<M,
\end{equation}
while its diagonal is still given by \eqref{eq:finite-reflected-diag}.  In particular, the fixed minus sign in \eqref{eq:finite-reflected-offdiag} is specific to the $0<q<1$ convention and is not analytically continued across $q=1$.

The critical scaling is determined by comparing the diagonal and off-diagonal coefficients of the centered matrix.  Let
\[
C_M := \Lambda_N I-H_{p_M,M}.
\]
Using \eqref{eq:qk-finite-eigenvalue} and \eqref{eq:qgreater1-ungauged-offdiag}, for every fixed $n$ one has the exact coefficients
\begin{align*}
(C_M)_{n,n+1} & = -\sqrt{r_M Q^n(1-Q^{n+1})(1-Q^{M-n})},\\
(C_M)_{n,n} & = (r_M-1)Q^n + (1-\rho_M)Q^N.
\end{align*}
If $d_M \to-\infty$, then $r_M \to0$ and the unscaled centered operator has vanishing off-diagonal coefficients and diagonal limit $-Q^n$.  If $d_M \to+\infty$, then after division by $r_M$ the off-diagonal coefficients again vanish and the diagonal limit is $Q^n$.  A single scalar normalization can therefore keep the off-diagonal coefficient at nonzero order while keeping the diagonal coefficient finite only when $r_M \asymp1$; since $\rho_M \to \rho\in(0,\infty)$, this is equivalent to
\[
M-2N = O(1).
\]
Thus, with the reflection point fixed, this is precisely the critical window in which a nontrivial coupled Jacobi limit can survive.  This observation is not meant to exclude other limits obtained by more general $M$-dependent conjugations, recenterings, or changes of reflection.

The critical window $M-2N=O(1)$ does not by itself determine a unique limiting regime.  The quantity $d_M:=M-2N$ is integer-valued, and along subsequences with $d_M=\kappa$ one obtains a fixed limiting regime.  In that case, $r_M = \rho_M q^{d_M} \longrightarrow \rho q^\kappa$, and the limiting Jacobi coefficients depend on the chosen value of $\kappa$.  There is also the parity constraint $d_M = M-2N\equiv M\pmod 2$. We fix one such sequence and assume
\[
M-2N = \kappa
\]
for all sufficiently large $M$.  Equivalently, $N=(M-\kappa)/2$ eventually.  Set
\[
c := (\rho q^\kappa)^{-1}>0.
\]
Define the actual ungauged finite reflected centered matrix on $\cH_M$ by
\begin{equation}\label{eq:LM-definition}
L_M := \frac{\Lambda_N I-H_{p_M,M}}{p_M q^M} = r_M^{-1}C_M.
\end{equation}
Its coefficients are therefore
\begin{align}
(L_M)_{n,n+1} & = -\sqrt{\frac{Q^n}{r_M}(1-Q^{n+1})(1-Q^{M-n})},\label{eq:qgreater1-L-offdiag}\\
(L_M)_{n,n} & = \left(1 - \frac1{r_M}\right)Q^n +
\frac{1-\rho_M}{r_M}Q^N.\label{eq:qgreater1-L-diag}
\end{align}
Let $\widehat L_M = L_M \oplus 0$ on $\cH$ according to Section~\ref{subsec:common-hilbert-space}.

To compare with the standard positive-Jacobi-gauge Al-Salam--Carlitz I recurrence, introduce the fixed alternating unitary
\begin{equation}\label{eq:qgreater1-gauge}
Ge_n := (-1)^ne_n, \qquad G_M := G|_{\cH_M},
\end{equation}
and set
\[
L_M^+ := G_M L_M G_M, \qquad \widehat L_M^+ := L_M^+ \oplus 0 = G\widehat L_M G.
\]
Thus $L_M^+$ has the same diagonal as $L_M$ and the opposite, positive, off-diagonal coefficients.  We use the superscript $+$ only to record this Jacobi gauge; no spectral data are changed.

\subsection{The critical Jacobi limit}

\begin{proposition}[Positive-gauge Al-Salam--Carlitz I limit]\label{prop:asc-limit-main}
For every fixed $n\in\Z_+$, the Jacobi coefficients of $L_M^+$ converge to
\begin{equation}\label{eq:asc-limit-coefficients}
a_n^{\ASC} = \sqrt{cQ^n(1-Q^{n+1})}, \qquad b_n^{\ASC} = (1-c)Q^n.
\end{equation}
The limiting Jacobi recurrence is the Al-Salam--Carlitz I recurrence with base $Q$ and parameter $a_{\ASC}=-c$.  If $U_n^{(-c)}(x;Q)$ is the monic Al-Salam--Carlitz I polynomial in the convention of Koekoek--Lesky--Swarttouw, then the corresponding orthonormal polynomial is
\begin{equation}\label{eq:asc-orthonormal-polynomial}
P_n^{\ASC}(x) = \frac{U_n^{(-c)}(x;Q)}{\sqrt{c^n(Q;Q)_n Q^{n(n-1)/2}}}.
\end{equation}
In the special case $c=1$, equivalently $\rho q^\kappa=1$, these are the discrete $Q$-Hermite I polynomials.
\end{proposition}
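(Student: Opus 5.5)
The plan is a direct computation from the exact finite coefficients \eqref{eq:qgreater1-L-offdiag}--\eqref{eq:qgreater1-L-diag}, followed by a comparison with the Koekoek--Lesky--Swarttouw recurrence. The gauge $G_M$ only flips the sign of the off-diagonal entries. The Jacobi coefficients of $L_M^+$ are therefore
\[
a_n^{(M),+}=\sqrt{\frac{Q^n}{r_M}(1-Q^{n+1})(1-Q^{M-n})},\qquad
b_n^{(M),+}=\Bigl(1-\frac1{r_M}\Bigr)Q^n+\frac{1-\rho_M}{r_M}Q^N .
\]
Under the standing assumption $M-2N=\kappa$ eventually, we have $r_M=\rho_Mq^{\kappa}\to\rho q^\kappa=c^{-1}$. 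For fixed $n$, $M-n\to\infty$ gives $Q^{M-n}\to0$, and $N\to\infty$ gives $Q^N\to0$, while $(1-\rho_M)/r_M$ stays bounded. Hence $a_n^{(M),+}\to\sqrt{cQ^n(1-Q^{n+1})}$ and $b_n^{(M),+}\to(1-c)Q^n$, which is \eqref{eq:asc-limit-coefficients}. This step is routine.

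For the identification, recall the monic Al-Salam--Carlitz I recurrence in KLS (14.24.3)--(14.24.4) with base $Q$ and parameter $a$:
\[
xU_n^{(a)}(x)=U_{n+1}^{(a)}(x)+(1+a)Q^nU_n^{(a)}(x)-aQ^{n-1}(1-Q^n)U_{n-1}^{(a)}(x).
\]
Setting $a=-c$ (admissible since $c>0$ gives $a<0$) yields diagonal coefficient $(1-c)Q^n=b_n^{\ASC}$. The coefficient of $U_{n-1}$ becomes $cQ^{n-1}(1-Q^n)=(a_{n-1}^{\ASC})^2$, as required for a monic recurrence associated with a Jacobi matrix. So the limiting Jacobi expression is exactly the positive-gauge symmetrization of this recurrence.

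The orthonormal polynomials are obtained by dividing $U_n$ by $\prod_{j<n}a_j^{\ASC}$. Its square is
\[
\prod_{j=0}^{n-1}cQ^j(1-Q^{j+1})=c^nQ^{n(n-1)/2}(Q;Q)_n,
\]
which gives \eqref{eq:asc-orthonormal-polynomial}. I would cross-check this against the KLS norm $(-a)^n(Q;Q)_nQ^{\binom n2}$, which agrees when $a=-c$.

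For $c=1$, i.e. $a=-1$, KLS identify $U_n^{(-1)}(x;Q)$ with the discrete $Q$-Hermite I polynomials $h_n(x;Q)$ (KLS \S14.28, via the specialization in 14.24). As a check, the diagonal then vanishes and $a_n^2=Q^n(1-Q^{n+1})$, which is the discrete $q$-Hermite I recurrence.

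The only delicate point is matching the KLS sign conventions for the parameter $a$ and the recurrence coefficients so that $a=-c$, not $c$, is correct. I would settle this by checking $n=0,1$ explicitly.
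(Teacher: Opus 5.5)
Your proposal is correct and matches the paper's proof essentially step for step. You read off the gauged coefficients from the exact finite formulas, use $r_M=\rho_M q^{\kappa}\to c^{-1}$ together with $Q^N\to0$ and $Q^{M-n}\to0$, and then match the limit with the KLS monic Al-Salam--Carlitz I recurrence at $a=-c$. Your normalization by $\prod_{j<n}a_j^{\ASC}$ amounts to the paper's division by the square roots of the KLS squared norms, as your own cross-check $(-a)^n(Q;Q)_nQ^{\binom n2}$ confirms.
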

\begin{proof}
By \eqref{eq:qgreater1-L-offdiag}--\eqref{eq:qgreater1-L-diag} and the gauge \eqref{eq:qgreater1-gauge}, the off-diagonal and diagonal coefficients of $L_M^+$ are
\[
\sqrt{\frac{Q^n}{r_M}(1-Q^{n+1})(1-Q^{M-n})}
\]
and
\[
\left(1 - \frac1{r_M}\right)Q^n + \frac{1-\rho_M}{r_M}Q^N,
\]
respectively.  Since $r_M = \rho_M q^{M-2N} \to \rho q^\kappa = c^{-1}$ and $Q^N\to0$, these converge to the coefficients in \eqref{eq:asc-limit-coefficients}.

For Al-Salam--Carlitz I, KLS \cite[Eq.~(14.24.3)]{KLS} give the monic recurrence
\[
xU_n^{(a)}(x;Q) = U_{n+1}^{(a)}(x;Q) + (a+1)Q^nU_n^{(a)}(x;Q) -
aQ^{n-1}(1-Q^n)U_{n-1}^{(a)}(x;Q).
\]
Setting $a=-c$ and dividing by the square roots of the squared norms from the orthogonality relation \cite[Eq.~(14.24.2)]{KLS} gives exactly \eqref{eq:asc-limit-coefficients} and \eqref{eq:asc-orthonormal-polynomial}.  The specialization $a=-1$ to discrete $Q$-Hermite I is KLS \cite[Eq.~(14.24.12)]{KLS}.
\end{proof}

Let $L_0$ be the operator with coefficients \eqref{eq:asc-limit-coefficients} on $c_{00}(\Z_+)$.  Since $a_n^{\ASC} = \sqrt c\,Q^{n/2}\sqrt{1-Q^{n+1}}$, we have
\[
\sum_{n=0}^\infty \frac1{a_n^{\ASC}} = \infty.
\]
Carleman's criterion for Jacobi matrices therefore implies that $L_0$ is essentially self-adjoint; see, for example, Simon~\cite[Corollary~4.5]{Simon}.  We denote its unique self-adjoint closure by $L$.  Thus, in contrast with Sections~\ref{sec:selected-extension}--\ref{sec:explicit-kernel}, no boundary condition at infinity and no selection among N-extremal measures is needed.

\begin{theorem}[Strong resolvent convergence in the critical $q>1$ regime]\label{thm:qgreater1-sr-main}
Under the assumptions above,
\begin{equation}\label{eq:qgreater1-strong-resolvent}
\widehat L_M^+ \xrightarrow{\mathrm{s.r.}} L, \qquad \text{and hence} \qquad
\widehat L_M \xrightarrow{\mathrm{s.r.}} GLG
\end{equation}
on the fixed Hilbert space $\cH$.
\end{theorem}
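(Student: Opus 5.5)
The strategy is the standard core criterion for strong resolvent convergence: if self-adjoint operators $T_M$ and $T$ share a common core $\mathcal D$ for $T$ and satisfy $T_M f\to Tf$ for every $f\in\mathcal D$, then $T_M\to T$ in the strong resolvent sense (Reed--Simon, Theorem VIII.25(a)). I take $T_M=\widehat L_M^+$, $T=L$, and $\mathcal D=c_{00}(\Z_+)$. The set $c_{00}$ is a core for $L$ because $L$ is, by definition, the closure of $L_0$ on $c_{00}$, and Carleman's criterion shows this closure is self-adjoint. The operators $\widehat L_M^+$ are bounded and self-adjoint on $\cH$, so every vector lies in their domains.

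Next I verify $\widehat L_M^+f\to Lf$ for $f\in c_{00}$. By linearity it suffices to treat $f=e_n$ with $n$ fixed. Once $M>n+1$, the vector $e_n$ lies in $\cH_M$ and sits away from the truncation boundary, so
\[
\widehat L_M^+e_n=(L_M^+)_{n-1,n}e_{n-1}+(L_M^+)_{n,n}e_n+(L_M^+)_{n,n+1}e_{n+1}.
\]
This involves only three coefficients, each of which converges by Proposition~\ref{prop:asc-limit-main} to the corresponding entry of
\[
Le_n=a_{n-1}^{\ASC}e_{n-1}+b_n^{\ASC}e_n+a_n^{\ASC}e_{n+1}.
\]
The convergence therefore holds in norm. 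This proves the first claim in \eqref{eq:qgreater1-strong-resolvent}.

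For the second claim, note that $G$ is a fixed unitary with $G^2=I$ and $\widehat L_M=G\widehat L_M^+G$. Hence
\[
(\widehat L_M-z)^{-1}=G(\widehat L_M^+-z)^{-1}G.
\]
Strong convergence of $(\widehat L_M^+-z)^{-1}$ to $(L-z)^{-1}$ then gives strong convergence of the left side to $G(L-z)^{-1}G=(GLG-z)^{-1}$, because conjugation by a fixed unitary preserves strong convergence.

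I do not expect a real obstacle. The only substantive input is essential self-adjointness of $L_0$, which the paper has already established via Carleman's criterion. This is exactly what makes the core criterion available here, in contrast to the $0<q<1$ regime, where $c_{00}$ is not a core for any self-adjoint extension and Theorem~\ref{thm:selection} was needed instead. One small point to check is that the zero block on $\cH_M^\perp$ does not matter. It does not, because each fixed $e_n$ eventually lies in $\cH_M$, as recorded in Section~\ref{subsec:common-hilbert-space}.
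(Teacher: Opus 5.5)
Your proposal is correct and follows the paper's proof: the common-core criterion \cite[Theorem~VIII.25(a)]{RS} with core $c_{00}(\Z_+)$, using essential self-adjointness of $L_0$ from Carleman's criterion and coefficientwise convergence from Proposition~\ref{prop:asc-limit-main}, and then conjugation by the fixed unitary $G$ for the second claim. Your extra details, namely the three-term action on $e_n$ and the fact that $c_{00}$ is a core for the bounded $\widehat L_M^+$, fill in steps the paper leaves implicit.
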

\begin{proof}
For every $f\in c_{00}(\Z_+)$, Proposition~\ref{prop:asc-limit-main} gives $\widehat L_M^+f \longrightarrow L_0 f$. Since $L_0$ is essentially self-adjoint, $c_{00}(\Z_+)$ is a core for $L$.  The common-core criterion \cite[Theorem~VIII.25(a)]{RS} therefore yields $\widehat L_M^+ \xrightarrow{\mathrm{s.r.}} L$. Conjugating by the fixed unitary $G$ gives the second convergence.
\end{proof}

\subsection{Negative spectral projections and the alternating gauge}

For $q>1$, the ordering of the finite $q$-Krawtchouk eigenvalues is reversed: $\Lambda_{k+1}-\Lambda_k = (1-q)(q^{-k-1}+p_M q^k)<0$. Consequently $\Lambda_N-\Lambda_k<0$ for $k<N$, $\Lambda_N-\Lambda_N = 0$, and $\Lambda_N-\Lambda_k>0$ for $k>N$. Since the ensemble occupies the degrees $k=0,\ldots,N-1$, its original reflected correlation projection, after zero extension to $\cH$, is
\begin{equation}\label{eq:finite-negative-projection}
\widehat K_{M,N} := K_{M,N} \oplus 0 = \ind_{(-\infty,0)}(\widehat L_M).
\end{equation}
Again the artificial zero eigenspace is excluded by the open half-line, so this extension does not alter the finite kernel on $\cH_M$.

Conjugating the entire projection by the same alternating gauge, define
\begin{equation}\label{eq:finite-negative-projection-gauged}
\widehat K_{M,N}^+ := G\widehat K_{M,N}G = \ind_{(-\infty,0)}(\widehat L_M^+).
\end{equation}
On the finite block this is $K_{M,N}^+ := G_M K_{M,N}G_M$, so entrywise
\begin{equation}\label{eq:finite-kernel-gauge}
K_{M,N}^+(m,n) = (-1)^{m+n}K_{M,N}(m,n).
\end{equation}
Thus the standard positive-gauge Al-Salam--Carlitz formulas below apply directly to $K_{M,N}^+$; the original reflected kernel is recovered at the end by the same checkerboard factor.

The Al-Salam--Carlitz I orthogonality measure for parameter $-c<0$ is supported on two geometric lattices; see KLS~\cite[Eq.~(14.24.2)]{KLS}.  In our normalization the $e_0$-spectral measure is
\begin{equation}\label{eq:asc-spectral-measure}
\mu = \sum_{j=0}^\infty\mu_j^+\delta_{Q^j} + \sum_{j=0}^\infty\mu_j^-\delta_{-cQ^j},
\end{equation}
where
\begin{equation*}
\mu_j^+ = \frac{Q^j}{(-c;Q)_\infty(Q;Q)_j(-Q/c;Q)_j},
\end{equation*}
\begin{equation}\label{eq:asc-negative-mass}
\mu_j^- = \frac{cQ^j}{(1+c)(-Q/c;Q)_\infty(Q;Q)_j(-cQ;Q)_j},
\end{equation}
and $\sum_{j\ge0}\mu_j^+ + \sum_{j\ge0}\mu_j^- = 1$. Because $e_0$ is cyclic for the Jacobi operator $L$, the spectrum is the closure of the support of its scalar $e_0$-spectral measure.  Hence
\begin{equation}\label{eq:asc-spectrum}
\sigma(L) = \{0\} \cup \{Q^j:j\ge0\} \cup \{-cQ^j:j\ge0\}.
\end{equation}
The point $0$ is the common accumulation point of the two branches, but the scalar measure \eqref{eq:asc-spectral-measure} has no atom there.  Cyclicity of $e_0$ therefore implies the operator identity
\begin{equation}\label{eq:asc-no-zero-atom}
E_L(\{0\}) = 0.
\end{equation}

\begin{theorem}[The $q>1$ limiting kernel]\label{thm:qgreater1-kernel-main}
Under the assumptions of Theorem~\ref{thm:qgreater1-sr-main},
\begin{equation}\label{eq:qgreater1-projection-convergence}
\ind_{(-\infty,0)}(\widehat L_M^+) \longrightarrow \ind_{(-\infty,0)}(L)
\end{equation}
strongly on $\cH$.  Equivalently,
\[
\widehat K_{M,N} \longrightarrow G\ind_{(-\infty,0)}(L)G
\]
strongly.  Define the standard positive-gauge Al-Salam--Carlitz kernel by
\begin{equation}\label{eq:qgreater1-spectral-series}
K_{\ASC}(m,n) := \sum_{j=0}^\infty \mu_j^-P_m^{\ASC}(-cQ^j)P_n^{\ASC}(-cQ^j).
\end{equation}
Then, for fixed $m,n\in\Z_+$,
\begin{equation}\label{eq:qgreater1-original-kernel-limit}
K_{M,N}(m,n) \longrightarrow (-1)^{m+n}K_{\ASC}(m,n).
\end{equation}
\end{theorem}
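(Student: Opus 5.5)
The plan is to derive the projection convergence from the strong resolvent convergence of Theorem~\ref{thm:qgreater1-sr-main}, using the standard spectral-calculus consequence of strong resolvent convergence. By \cite[Theorem~VIII.24(b)]{RS}, if $\widehat L_M^+ \xrightarrow{\mathrm{s.r.}} L$ and $a<b$ are real numbers that are not eigenvalues of $L$, then
\[
E_{\widehat L_M^+}((a,b)) \to E_L((a,b))
\]
strongly. Here the relevant set is the half-line $(-\infty,0)$. Its finite endpoint $0$ satisfies $E_L(\{0\})=0$ by \eqref{eq:asc-no-zero-atom}. To handle the infinite endpoint, I would either take $a$ below $\inf\sigma(L)$ or use the version for half-lines.

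The infinite endpoint needs a uniform lower bound, which requires some care because the finite operators might carry spectrum far to the left. The first approach is to observe that $\ind_{(-\infty,0)}$ equals $\ind_{(-R,0)}$ on $\sigma(L)\subset[-c,1]$. The strong convergence
\[
E_{\widehat L_M^+}((-\infty,-R]) \to E_L((-\infty,-R]) = 0
\]
then follows from the complementary statement of Theorem~VIII.24 as $R\to\infty$, provided $-R$ is not an eigenvalue of $L$. The cleanest route is to write
\[
\ind_{(-\infty,0)} = \ind_{(-\infty,-R)} + \ind_{[-R,0)},
\]
and to apply VIII.24(b) to the bounded interval $(-R,0)$, which is admissible because $-R$ and $0$ are not eigenvalues of $L$. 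For the tail I would use
\[
\|E_{\widehat L_M^+}((-\infty,-R))f\| \le \|E_{\widehat L_M^+}(\R\setminus(-R,R))f\|.
\]
The right-hand side tends to $\|E_L(\R\setminus(-R,R))f\|=0$ by VIII.24 applied to $(-R,R)$, since
\[
\|E_{\widehat L_M^+}((-R,R))f\|\to\|f\|.
\]
Alternatively, I would bound the spectra of $L_M$ uniformly from the explicit coefficients. The off-diagonals are $O(\sqrt{Q^n/r_M})$ and the diagonals are $O(Q^n/r_M)$, so $\|L_M\|$ is uniformly bounded by a Schur test, and a single large $R$ then suffices.

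Given \eqref{eq:qgreater1-projection-convergence}, the identity \eqref{eq:finite-negative-projection-gauged} shows that $G\widehat K_{M,N}G\to \ind_{(-\infty,0)}(L)$ strongly. Conjugating by the fixed unitary $G$ yields the stated convergence of $\widehat K_{M,N}$. Taking matrix entries $\langle e_m,\cdot\,e_n\rangle$ and using \eqref{eq:finite-kernel-gauge} gives
\[
K_{M,N}(m,n)\to(-1)^{m+n}\langle e_m,\ind_{(-\infty,0)}(L)e_n\rangle .
\]

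It then remains to identify this matrix entry with the series \eqref{eq:qgreater1-spectral-series}. Since $e_0$ is cyclic and $e_n=P_n^{\ASC}(L)e_0$, the spectral theorem gives
\[
\langle e_m,\ind_{(-\infty,0)}(L)e_n\rangle=\int_{(-\infty,0)}P_m^{\ASC}P_n^{\ASC}\,d\mu .
\]
The negative part of the measure \eqref{eq:asc-spectral-measure} is exactly the atoms $\mu_j^-$ at $-cQ^j$, so this integral equals $K_{\ASC}(m,n)$. The series converges absolutely because the polynomials are bounded on $[-c,0]$ and $\sum_j\mu_j^-<\infty$.

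The only genuinely delicate step is the endpoint at $0$, which is an accumulation point of $\sigma(L)$. It is handled by \eqref{eq:asc-no-zero-atom}: VIII.24 requires only that the endpoint not be an eigenvalue, not that it be outside the spectrum.
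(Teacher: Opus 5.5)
Your proposal is correct and follows essentially the same route as the paper. The paper applies Reed--Simon Theorem VIII.24(b) with the non-eigenvalue endpoint $0$ from \eqref{eq:asc-no-zero-atom}, conjugates by $G$, and reads off the negative branch of the $e_0$-spectral measure via $e_n=P_n^{\ASC}(L)e_0$. The only difference is at the infinite endpoint: the paper uses that the lowest finite eigenvalue (degree $k=0$) converges to $-c$, so a single fixed $a<-c-1$ suffices, whereas your tail-mass argument through $(-R,R)$, or your uniform coefficient bound, gets the same conclusion without that finite spectral information. In the tail, take $(-\infty,-R]$ rather than $(-\infty,-R)$ so that it pieces together exactly with the interval $(-R,0)$.
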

\begin{proof}
By \eqref{eq:asc-no-zero-atom}, $0$ is not an eigenvalue of $L$.  From the finite eigenvalue ordering, the degree-$k=0$ eigenvalue of $\widehat L_M^+$ converges to $-c$; hence, for any fixed $a<-c-1$, both $\widehat L_M^+$ for all sufficiently large $M$ and $L$ have no spectrum below $a$.  Therefore the spectral-projection convergence theorem \cite[Theorem~VIII.24(b)]{RS}, together with Theorem~\ref{thm:qgreater1-sr-main}, gives $\ind_{(-\infty,0)}(\widehat L_M^+) \longrightarrow \ind_{(-\infty,0)}(L)$ strongly.  The spectral expansion defining $K_{\ASC}$ is the negative branch of the Al-Salam--Carlitz I measure above, and the checkerboard identity then converts the positive-gauge matrix elements back to the original reflected kernel.
\end{proof}

\begin{remark}[Gauge invariance of the determinantal process]\label{rem:qgreater1-gauge-invariance}
For any finite set of sites $x_1,\ldots,x_r$, conjugating the kernel matrix by the diagonal matrix $\diag ((-1)^{x_1},\ldots,(-1)^{x_r})$ leaves its determinant unchanged.  Hence $K_{M,N}$ and $K_{M,N}^+$ define the same finite determinantal point process, and the same is true for their limiting kernels.  The alternating factor in \eqref{eq:qgreater1-original-kernel-limit} is therefore immaterial at the level of correlation minors, but it is essential for the pointwise kernel limit.
\end{remark}

\begin{remark}[Finite spectral picture]
The two branches in \eqref{eq:asc-spectrum} can already be read from the finite eigenvalues.  For fixed $k$,
\[
\frac{\Lambda_N-\Lambda_k}{p_M q^M} \longrightarrow -cQ^k,
\]
whereas for fixed $j$ and $k=M-j$,
\[
\frac{\Lambda_N-\Lambda_{M-j}}{p_M q^M} \longrightarrow Q^j.
\]
If instead $k_M \to\infty$ and $M-k_M \to\infty$, then
\[
\frac{\Lambda_N-\Lambda_{k_M}}{p_M q^M} \longrightarrow 0.
\]
Thus the two extreme degree ranges converge to the two geometric Al-Salam--Carlitz I branches, while the central degrees collapse toward their common accumulation point $0$.
\end{remark}

\subsection{Integrable form}

The positive-gauge spectral series \eqref{eq:qgreater1-spectral-series} admits a Christoffel--Darboux representation similar to that in the $0<q<1$ regime, but the endpoint geometry is different.  Here the relevant dual lattice itself accumulates at $0$.

Set
\[
x_j := -cQ^j, \qquad j\in\Z_+.
\]
The Al-Salam--Carlitz I $Q$-difference equation, KLS~\cite[Eq.~(14.24.5)]{KLS}, gives a second-order difference equation in $j$ with eigenvalue $Q^{-n}$.  In a convenient normalization its off-diagonal coefficients are
\begin{equation}\label{eq:asc-dual-coefficients}
A_j = -\frac1{cQ^{2j+1}}, \qquad C_j = -\frac{(1+cQ^j)(1-Q^j)}{cQ^{2j}}, \qquad C_0 = 0,
\end{equation}
with the diagonal coefficient determined by the eigenvalue equation.  The negative-branch masses satisfy detailed balance $\mu_j^-A_j = \mu_{j+1}^-C_{j+1}$. Consequently the weighted Green identity applied to the two sequences $j \longmapsto P_m^{\ASC}(x_j)$ and $j \longmapsto P_n^{\ASC}(x_j)$ telescopes.  Since $C_0=0$, summing over $j=0,\ldots,J$ gives
\begin{align*}
&(Q^{-m}-Q^{-n})\sum_{j=0}^J\mu_j^-
P_m^{\ASC}(x_j)P_n^{\ASC}(x_j)\\
& \qquad = -\mu_J^-A_J
\bigl(
P_m^{\ASC}(x_J)P_n^{\ASC}(x_{J+1}) -
P_m^{\ASC}(x_{J+1})P_n^{\ASC}(x_J)
\bigr).
\end{align*}
At the boundary $j=J$, $x_J=-cQ^J\to0$.  Write
\[
\mathcal A_n := P_n^{\ASC}(0), \qquad \mathcal B_n := (P_n^{\ASC})'(0).
\]
Since $P_n^{\ASC}(x) = \mathcal A_n+x\mathcal B_n + O(x^2)$, one obtains
\begin{align*}
&P_m^{\ASC}(x_J)P_n^{\ASC}(x_{J+1}) -
P_m^{\ASC}(x_{J+1})P_n^{\ASC}(x_J)\\
& \qquad = c(1-Q)Q^J
(\mathcal A_m \mathcal B_n - \mathcal B_m \mathcal A_n) +
O(Q^{2J}).
\end{align*}
Moreover, by \eqref{eq:asc-negative-mass} and \eqref{eq:asc-dual-coefficients},
\[
-\mu_J^-A_J = \frac{Q^{-J-1}}{(1+c)(-Q/c;Q)_\infty(Q;Q)_J(-cQ;Q)_J}.
\]
Hence the boundary term at $j=J$ converges to
\[
\frac{c(1-Q)}{Q(1+c)(Q;Q)_\infty(-cQ;Q)_\infty(-Q/c;Q)_\infty}
(\mathcal A_m \mathcal B_n - \mathcal B_m \mathcal A_n).
\]
Passing to $J\to\infty$ in the partial spectral sum gives the following formula.

\begin{proposition}[Integrable form of the positive-gauge $q>1$ kernel]\label{prop:qgreater1-integrable-main}
Define
\begin{equation}\label{eq:qgreater1-C-constant}
C_{c,Q} := \frac{c(1-Q)}{Q(1+c)(Q;Q)_\infty(-cQ;Q)_\infty(-Q/c;Q)_\infty}.
\end{equation}
For $m\ne n$,
\begin{equation}\label{eq:qgreater1-wronskian}
K_{M,N}(m,n) \longrightarrow (-1)^{m+n}K_{\ASC}(m,n) = (-1)^{m+n}C_{c,Q} \frac{
\mathcal A_m \mathcal B_n - \mathcal B_m \mathcal A_n}{Q^{-m}-Q^{-n}}.
\end{equation}
For $m=n$ we retain the spectral series \eqref{eq:qgreater1-spectral-series}.
\end{proposition}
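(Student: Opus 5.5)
The first equality in \eqref{eq:qgreater1-wronskian} is Theorem~\ref{thm:qgreater1-kernel-main}. It remains to identify $K_{\ASC}(m,n)$ for $m\ne n$ with the Wronskian expression. I will do this by making the telescoping computation just sketched rigorous, in three steps.

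\textbf{Step 1: the finite Green identity.} Start from the Al-Salam--Carlitz I $Q$-difference equation \cite[Eq.~(14.24.5)]{KLS}. On the negative lattice $x_j=-cQ^j$ it reads
\[
Q^{-n}P_n^{\ASC}(x_j)=A_jP_n^{\ASC}(x_{j+1})+D_jP_n^{\ASC}(x_j)+C_jP_n^{\ASC}(x_{j-1}),
\]
with $A_j,C_j$ as in \eqref{eq:asc-dual-coefficients} and $D_j$ independent of $n$. The orthonormalization \eqref{eq:asc-orthonormal-polynomial} is only an $n$-dependent constant, so it does not affect this equation. Two facts must be checked directly from \eqref{eq:asc-negative-mass}:
\begin{itemize}
\item the ratio $\mu_{j+1}^-/\mu_j^-$ equals $A_j/C_{j+1}$, i.e.\ detailed balance holds;
\item $C_0=0$.
\end{itemize}
Both are one-line ratio computations. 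Multiply the equation for $m$ by $\mu_j^-P_n^{\ASC}(x_j)$ and subtract the same with $m,n$ swapped. The $D_j$ terms cancel. Detailed balance turns the $C_j$ term at level $j$ into minus the $A_{j-1}$ term at level $j-1$. Summing over $j=0,\ldots,J$ and using $C_0=0$ then gives the displayed finite identity, with the single boundary term at $J$.

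\textbf{Step 2: the limit $J\to\infty$.} The left side converges to $(Q^{-m}-Q^{-n})K_{\ASC}(m,n)$ by \eqref{eq:qgreater1-spectral-series}, since that series converges absolutely: the $\mu_j^-$ decay geometrically and the polynomials stay bounded on $[-c,0]$. For the right side:
\begin{itemize}
\item Taylor-expand $P_m^{\ASC},P_n^{\ASC}$ at $0$ to first order. The $\mathcal A_m\mathcal A_n$ terms cancel in the antisymmetric combination. The first-order terms give $(x_{J+1}-x_J)(\mathcal A_m\mathcal B_n-\mathcal B_m\mathcal A_n)=c(1-Q)Q^J(\cdots)$. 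The remainder is $O(Q^{2J})$, uniformly, because the polynomials are fixed.
\item The prefactor $-\mu_J^-A_J$ is exactly $Q^{-J-1}/\bigl[(1+c)(-Q/c;Q)_\infty(Q;Q)_J(-cQ;Q)_J\bigr]$. Multiplied by $Q^J$, it converges to $\bigl[Q(1+c)(-Q/c;Q)_\infty(Q;Q)_\infty(-cQ;Q)_\infty\bigr]^{-1}$, while the $O(Q^{2J})\cdot Q^{-J}$ error tends to zero.
\end{itemize}
The boundary term therefore converges to $C_{c,Q}(\mathcal A_m\mathcal B_n-\mathcal B_m\mathcal A_n)$. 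Dividing by $Q^{-m}-Q^{-n}\neq0$ for $m\ne n$ gives the formula.

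\textbf{Main obstacle.} The hardest step is the bookkeeping in Step 1. I must verify that the coefficients \eqref{eq:asc-dual-coefficients} really come from \eqref{eq:asc-negative-mass} in the same normalization, including the signs of $A_j$ and $C_j$ and the sign convention of the boundary term. Otherwise the overall sign of $C_{c,Q}$ could flip. I would first rewrite KLS (14.24.5) with $a=-c$ and $x=-cQ^j$, read off $A_j$ and $C_j$ explicitly, and check the detailed-balance ratio against $\mu_{j+1}^-/\mu_j^-=Q(1+cQ^{j+1})^{-1}(1-Q^{j+1})^{-1}$. As a sanity check, I would test the final formula numerically on the small case $m=0$, $n=1$, where $\mathcal A_0=1$ and $\mathcal B_0=0$.
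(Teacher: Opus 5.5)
Your proposal is correct and is essentially the paper's own argument: the weighted Green identity for the Al-Salam--Carlitz I $Q$-difference equation on the lattice $x_j=-cQ^j$, telescoped using detailed balance $\mu_j^-A_j=\mu_{j+1}^-C_{j+1}$ and $C_0=0$, then a first-order Taylor expansion at the accumulation point $0$, with the limit of $-\mu_J^-A_J\cdot c(1-Q)Q^J$ producing $C_{c,Q}$. The bookkeeping you worried about checks out, so no sign flip occurs: dividing the $Q$-difference equation by $x_j^2$ gives exactly $A_j=-1/(cQ^{2j+1})$ and $C_j=-(1+cQ^j)(1-Q^j)/(cQ^{2j})$, and your $m=0$, $n=1$ test agrees with a direct telescoping evaluation of $\sum_j\mu_j^-P_1^{\ASC}(x_j)$.
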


The forward-shift identity KLS~\cite[Eq.~(14.24.7)]{KLS}, after the normalization \eqref{eq:asc-orthonormal-polynomial}, gives, for $n\ge1$,
\begin{equation*}
\mathcal B_n = \frac{\sqrt{1-Q^n}}{(1-Q)\sqrt c}\,
Q^{-(n-1)/2}\mathcal A_{n-1}.
\end{equation*}
Here $\mathcal B_0=0$. Thus \eqref{eq:qgreater1-wronskian} may, if desired, be written entirely in terms of the values $\{P_k^{\ASC}(0)\}_{k\ge0}$.

\subsection*{Acknowledgements}
The author is grateful to Anton Nazarov for useful discussions. This work was supported by the Beijing Natural Science Foundation Grant No. IS26011.

\end{document}